\newcommand{\rmnum}[1]{\romannumeral #1}
\newcommand{\Rmnum}[1]{\expandafter\@slowromancap\romannumeral #1@}
\newtheorem{theorem}{Theorem}
\newtheorem{corollary}[theorem]{Corollary}
\newtheorem{proposition}[theorem]{Proposition}
\newtheorem{lemma}[theorem]{Lemma}
\newtheorem{remark}[theorem]{Remark}
\def \Spec{\textup{Spec}}
\def \diag{\textup{diag}}
\def \T{\textup{T}}
\begin{document}
	\title{Almost all cographs have a cospectral mate}
	\author{Wei Wang\thanks{Corresponding author. Email address: wangwei.math@gmail.com}\quad\quad Ximei Huang\\
		{\footnotesize $^a$ School of Mathematics, Physics and Finance, Anhui Polytechnic University, Wuhu 241000, P. R. China
		}
	}
	\date{}
	\maketitle
	\begin{abstract}
Complement-reducible graphs (or cographs)  are the graphs formed from the single-vertex graph  by the operations of complement and disjoint union. By combining the Johnson-Newman theorem on generalized cospectrality with the standard tools in the asymptotic enumeration of trees, we show that almost all cographs have a cospectral mate.  This result can be viewed as an analogue to a well-known result by Schwenk, who proved that almost all trees  have a cospectral mate. 
		
	\end{abstract}

	\noindent\textbf{Keywords:} cograph;  cotree;  
	generalized spectrum; cospectral mate;  symbolic enumeration method; threshold graph
	
	\noindent\textbf{Mathematics Subject Classification:} 05C50
	
	\section{Introduction}
	A complement-reducible graph (or cograph for short) is a graph defined by the following rules  \cite{corneil}:
	
	(\rmnum{1}) the single-vertex graph  is a cograph;
	
	(\rmnum{2}) if $G$ and $H$ are cographs then their disjoint union $G\cup H$ is a cograph;
	
	(\rmnum{3}) if $G$ is a cograph then its complement $\overline{G}$ is a cograph.\\	
Alternatively, instead of using the  complement operation $\overline{G}$, one can use the  join operation $G\vee H$\, which consists of forming the disjoint union $G\cup H$ and then adding an edge between every pair of a vertex from $G$ and a vertex from $H$.   Cographs have arisen in many disparate areas of mathematics and have been independently rediscovered by various researchers (see e.g. \cite{seinsche,jung,sumner,lerchs}). There are many equivalent characterizations of cographs. For example, a graph is a cograph if and only if it is $P_4$-free. Here, a graph is called $P_4$-free if it does not contain the path $P_4$ as an induced subgraph. 

For a graph $G$ with adjacency matrix $A$, the  spectrum of $G$, denoted by $\Spec(G)$, is the multiset of the eigenvalues of $A$. Two graphs $G$ and $H$ are \emph{cospectral} if $\Spec(G)=\Spec(H)$; and we say $H$ is a cospectral mate of $G$ if, in addition, they are not isomorphic. A graph $G$ is \emph{determined by its spectrum} (DS)  if it has no cospectral mate. For general graphs, Haemers \cite{van} conjectured that almost all graphs are determined by their spectrum. However, for the family of trees,  Schwenk \cite{schwenk} proved the other extreme, that is, almost all trees have a cospectral mate. The main result of this paper is to obtain a Schwenk-like result for the family of cographs. 

\begin{theorem}\label{main}
	Almost all cographs have a cospectral mate.
\end{theorem}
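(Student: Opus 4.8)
\noindent\emph{Proof sketch.}
The plan is to prove the formally stronger statement that almost all cographs are not \emph{determined by their generalized spectrum} (DGS); that is, that for all but a $o(1)$ fraction of cographs $G$ there is a non-isomorphic graph $H$ with $\Spec(G)=\Spec(H)$ and $\Spec(\overline{G})=\Spec(\overline{H})$. Since such an $H$ is in particular a cospectral mate of $G$, Theorem~\ref{main} follows. Throughout we identify a cograph with its \emph{canonical cotree}: the (unique up to isomorphism) rooted tree whose leaves are the vertices of $G$ and whose internal nodes carry labels from $\{\cup,\vee\}$ alternating along every root-to-leaf path, where a $\cup$- (resp.\ $\vee$-) labelled root corresponds to a disjoint union (resp.\ join). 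The argument has two parts: a \emph{switching lemma} showing that a fixed local pattern in a cotree forces a generalized cospectral mate, and an \emph{asymptotic enumeration} showing that this pattern occurs in all but a vanishing fraction of cographs.

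\noindent\emph{Step 1: a switching lemma from the Johnson--Newman theorem.}
Let $D_1,D_2$ be generalized cospectral graphs. By the Johnson--Newman theorem there is a real orthogonal matrix $Q$ with $Q\mathbf{1}=\mathbf{1}$ (hence also $Q^{\T}\mathbf{1}=\mathbf{1}$) and $Q^{\T}A(D_1)Q=A(D_2)$. Suppose that the cotree of a cograph $G$ contains the cotree of $D_1$ as a pendant subtree at some internal node; then $S:=V(D_1)$ is a \emph{module} of $G$, so $V(G)$ splits (disjointly) as $S\cup T_1\cup T_0$ with every vertex of $T_1$ joined to all of $S$ and every vertex of $T_0$ to none of $S$. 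Writing $A(G)$ in these blocks, the block-diagonal matrix $P:=\diag(Q,I,I)$ is orthogonal, has all row sums equal to $1$, and --- using $Q^{\T}\mathbf{1}=\mathbf{1}$, which gives $Q^{\T}J_{S,T_1}=J_{S,T_1}$ --- satisfies $P^{\T}A(G)P=A(G')$, where $G'$ is the cograph obtained from $G$ by replacing this copy of $D_1$ with $D_2$. Thus $G$ and $G'$ are generalized cospectral, and an induction on $|V(G)|$ using the uniqueness of the canonical cotree and $D_1\not\cong D_2$ shows $G'\not\cong G$. Consequently, once one fixes a single pair $D_1\not\cong D_2$ of generalized cospectral \emph{cographs}, every cograph whose cotree contains a pendant copy of $D_1$ at a $\cup$-node --- or, by complementation, of $\overline{D_1}$ at a $\vee$-node --- fails to be DGS, hence has a cospectral mate. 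Such a pair of cographs exists: one exhibits a small explicit example (for instance, via the Johnson--Newman correspondence, by applying a Godsil--McKay switch to a cograph along a switching set that preserves $P_4$-freeness). Fix such a pair, with $D_1$ connected on $d$ vertices.

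\noindent\emph{Step 2: asymptotic enumeration of the bad cographs.}
Let $\mathcal{A}$ be the class of cographs whose cotree contains neither a pendant $D_1$ at a $\cup$-node nor a pendant $\overline{D_1}$ at a $\vee$-node. By Step~1, every DGS cograph lies in $\mathcal{A}$, so it suffices to show $|\mathcal{A}_n|=o(c_n)$, where $c_n$ is the number of cographs on $n$ vertices. The symbolic method, together with the complement symmetry that equates the numbers of $\cup$-rooted and $\vee$-rooted cographs on $m$ vertices, gives for the ordinary generating function $F(x)$ of the ``$\vee$-rooted or trivial'' cographs the equation
\[
F(x)=\frac{x}{2}+\frac12\left(\exp\!\Big(\sum_{k\ge1}\frac{F(x^k)}{k}\Big)-1\right),
\]
which --- the terms $F(x^{k})$ for $k\ge2$ being analytic in a strictly larger disc --- has a square-root singularity at some $\rho\in(0,1)$, so that $C(x):=2F(x)-x$ gives $c_n\sim a\,\rho^{-n}n^{-3/2}$. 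Forbidding the pattern amounts to deleting the single admissible summand $D_1$ (resp.\ $\overline{D_1}$) from each multiset of children at the relevant nodes; using $\exp\!\big({-}\sum_{k\ge1}x^{kd}/k\big)=1-x^{d}$ together with the same complement symmetry, the generating function $\widehat{F}(x)$ of the ``$\vee$-rooted or trivial'' members of $\mathcal{A}$ satisfies
\[
\widehat{F}(x)=\frac{x+x^{d}}{2}+\frac12\left((1-x^{d})\exp\!\Big(\sum_{k\ge1}\frac{\widehat{F}(x^k)}{k}\Big)-1\right).
\]
Here the dominant nonlinear term has been multiplied by the factor $1-x^{d}<1$, so $\widehat{F}\le F$ coefficientwise; moreover at $x=\rho$ the partial derivative of the right-hand side with respect to $\widehat{F}$ is strictly less than $1$, since it is dominated by the analogous (increasing) quantity for $F$ evaluated at the strictly smaller value $\widehat{F}(\rho)<F(\rho)$, which equals $1$ only at $F(\rho)$. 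By the standard singularity analysis for such positive recursive systems (P\'olya--Otter / Drmota--Lalley--Woods), $\widehat{F}$ is therefore analytic at $\rho$ and has its square-root singularity at some $\widehat{\rho}\in(\rho,1)$, whence $|\mathcal{A}_n|=[x^n]\big(2\widehat{F}(x)-x\big)=O\!\big(\widehat{\rho}^{-n}n^{-3/2}\big)=o(c_n)$. Together with Step~1, this shows that all but a vanishing fraction of cographs have a cospectral mate, proving Theorem~\ref{main}.

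\noindent\emph{Main obstacle.}
The technical heart of the argument is the last comparison in Step~2: one must check rigorously that deleting a fixed branch pushes the dominant singularity strictly outward and that the relevant singularity analysis applies --- the unordered children force P\'olya substitutions $x\mapsto x^{k}$, the alternating labels force an a priori two-component mutually recursive system, and the usual positivity, aperiodicity and irreducibility hypotheses must be verified. A secondary point requiring care is producing an explicit small pair $D_1\not\cong D_2$ of generalized cospectral cographs, and confirming that the non-isomorphism $G'\not\cong G$ is genuinely robust over all occurrences of the pattern.
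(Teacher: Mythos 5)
Your overall strategy is the same as the paper's: a Johnson--Newman substitution lemma (replace a pendant module of the cotree by a generalized cospectral mate via a block-diagonal regular orthogonal matrix), followed by an Otter-style radius-of-convergence comparison showing that the cotrees avoiding the seed pattern are exponentially rare. Your Step 2, although written with the factor $(1-x^d)$ multiplying the exponential rather than the paper's additive $x^m$ correction, describes essentially the same forbidden-pattern class, and your subcriticality comparison at $x=\rho$ is in substance the paper's argument that the characteristic equation for the restricted series cannot be met at or before $\rho$; the deferred details (boundedness of the restricted series at its radius, the vanishing of the $y$-derivative at the singularity, the positive square-root asymptotics for the full count) are exactly the lemmas the paper supplies, so that part is sound in outline.

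The genuine gap is in Step 1: the seed pair. You require a pair $D_1\not\cong D_2$ of generalized cospectral \emph{cographs} and assert that "a small explicit example" exists, e.g.\ via a Godsil--McKay switch preserving $P_4$-freeness. This is not established, and it is not a routine point: an exhaustive search shows that no such pair exists on fewer than $15$ vertices, and among the $1{,}399{,}068$ cographs of order $15$ there are essentially only two such pairs -- so there is no "small" example, and no argument is given that a GM switching set can be chosen inside a cograph so that the switched graph is again a cograph and non-isomorphic to the original. Since the entire theorem hinges on exhibiting one such pair, this assertion is the missing foundation of your proof. The paper avoids the difficulty by relaxing your requirement: its seed is a $9$-vertex cograph whose generalized cospectral mate is \emph{not} a cograph (found by exhaustive search over order-$9$ graphs), so the replaced graph $G'$ contains an induced $P_4$ and the non-isomorphism $G'\not\cong G$ is immediate, with no appeal to cotree uniqueness or to the merging subtleties that arise when the root label of $D_2$ coincides with the label of the parent node. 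If you keep your cograph--cograph formulation (which is what the paper needs only for the stronger statement about mates \emph{within} the class of cographs), you must actually exhibit the $15$-vertex pair; either way, an explicit, verifiable seed pair has to be produced rather than postulated.
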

\begin{remark}\normalfont
	Although the spectral properties of cographs have received much attention in recent years (see e.g. \cite{ghorbani,mandal,jacobs}), very little work is devoted to the spectral determination of cographs. In a recent manuscript \cite{allem}, Allem et al.~proved that all regular cographs are determined by their Laplacian spectrum (or equivalently by their adjacency spectrum according to \cite[Proposition 3]{van}).
\end{remark}
 For technical reasons, we shall prove a slightly stronger version of Theorem \ref{main} using the notion of generalized spectrum. For a graph $G$, the \emph{generalized spectrum} of $G$ is the pair $(\Spec(G), \Spec(\overline{G}))$. Two graphs $G$ and $H$  are \emph{generalized cospectral} if  $(\Spec(G), \Spec(\overline{G}))=(\Spec(H),\Spec(\overline{H}))$. We say that $H$ is a generalized cospectral mate of $G$ if they are generalized cospectral but not isomorphic. A graph $G$ is \emph{determined by its generalized spectrum} (DGS) if $G$ has no generalized cospectral mate. We note that by the definition, a generalized cospectral mate of $G$ is necessarily a  cospectral mate of $G$, but the converse is not true in general.  Thus, the following theorem is stronger than Theorem \ref{main}.
 \begin{theorem}\label{main2}
 	Almost all cographs have a generalized cospectral mate.
 \end{theorem}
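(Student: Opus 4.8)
The plan is to isolate a single ``local perturbation'' which (i) almost never changes a random cograph's generalized spectrum and (ii) almost always changes its isomorphism type, and then to show, through the analytic combinatorics of cotrees, that this perturbation applies to almost every cograph.

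\medskip
\noindent\textbf{Step 1: a gadget and a substitution lemma.} Fix once and for all a pair of non\-/isomorphic cographs $H_1,H_2$ on a common vertex set of size $m$ with the same generalized spectrum (such a pair exists; a small explicit example is a finite verification, the usual source of cospectral mates, Godsil--McKay switching, turning out to act essentially trivially on cographs). It will be convenient to replace $H_i$ by $K_1\vee H_i$, which is again a generalized cospectral pair and has cotree rooted at a join node; so we may assume $H_1,H_2$ have the same cotree root type. By the Johnson--Newman theorem there is an orthogonal matrix $Q$ with $Q\mathbf 1=\mathbf 1$ and $Q^{\T}A(H_1)Q=A(H_2)$. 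Now let $G$ be any cograph containing $H_1$ as a module $M$ (i.e. $G[M]\cong H_1$ and every vertex outside $M$ is adjacent to all or to none of $M$), and let $G'$ be obtained by substituting $H_2$ for $G[M]$ on the same vertices. Since $M$ is a module, $A(G)=\left(\begin{smallmatrix}A(H_1)&\mathbf 1 s^{\T}\\ s\mathbf 1^{\T}&A(G-M)\end{smallmatrix}\right)$ with $s$ the indicator of the neighbours of $M$; then $\widetilde Q:=Q\oplus I$ is orthogonal, fixes $\mathbf 1$, and $\widetilde Q^{\T}A(G)\widetilde Q=\left(\begin{smallmatrix}A(H_2)&\mathbf 1 s^{\T}\\ s\mathbf 1^{\T}&A(G-M)\end{smallmatrix}\right)=A(G')$, using $Q^{\T}\mathbf 1=\mathbf 1$ to leave the off-diagonal blocks unchanged. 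Hence $G$ and $G'$ are generalized cospectral; $G'$ is again a cograph; and $G'\not\cong G$ provided $M$ is a strong module, realised in the canonical cotree as a pendant subtree (then the canonical cotree of $G'$ is obtained from that of $G$ by replacing one pendant copy of $\mathrm{cotree}(H_1)$ by $\mathrm{cotree}(H_2)$, and these differ since $H_1\not\cong H_2$).

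\medskip
\noindent\textbf{Step 2: reduction to a pattern-occurrence statement.} By Step~1 it suffices to prove that almost every cograph, read through its canonical cotree, contains the rooted node-colored tree $\mathcal P:=\mathrm{cotree}(K_1\vee H_1)$ as a pendant subtree hanging below a union node. (Since $\mathcal P$ is rooted at a join node, such an occurrence is consistent with the alternation of a canonical cotree, and substituting $\mathrm{cotree}(K_1\vee H_2)$ there produces a generalized cospectral mate.) The only ``realisability'' input the enumeration needs is that $\mathcal P$ does occur in cographs of every sufficiently large order --- e.g. in $W\cup(K_1\vee H_1)$ for any connected cograph $W$.

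\medskip
\noindent\textbf{Step 3: the enumeration.} Let $D(x)$ and $C(x)$ be the ordinary generating functions of connected cographs and of all cographs by number of vertices. Modular decomposition gives the Pólya-type system
\[
2D(x)=x-1+\exp\!\Bigl(\textstyle\sum_{k\ge1}\tfrac1k D(x^k)\Bigr),\qquad C(x)+1=\exp\!\Bigl(\textstyle\sum_{k\ge1}\tfrac1k D(x^k)\Bigr),
\]
for which the standard (Otter--Pólya) singularity analysis applies: $D$ has a square-root singularity at its radius of convergence $\rho\in(0,1)$ and $c_n=[x^n]C(x)\sim b\,\rho^{-n}n^{-3/2}$. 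Introducing the bivariate generating function $C(x,u)$ marking occurrences of the pattern $\mathcal P$, one obtains an analytic perturbation of this system; the transfer theorems for additive parameters / subtree-pattern counts on Pólya trees then yield that the number $X_n$ of occurrences of $\mathcal P$ in a uniformly random cograph on $n$ vertices satisfies $\mathbb E[X_n]\sim\mu n$ with $\mu>0$ and $\mathrm{Var}(X_n)=O(n)$. Hence $\mathbb P(X_n=0)\le \mathrm{Var}(X_n)/\mathbb E[X_n]^2=O(1/n)\to0$, so almost every cograph contains $\mathcal P$ and therefore has a generalized cospectral mate. As a generalized cospectral mate is in particular a cospectral mate, Theorem~\ref{main} follows too.

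\medskip
\noindent\textbf{Main obstacle.} Two points carry the weight. First, producing the gadget: one must exhibit a concrete pair of non-isomorphic cographs with identical generalized spectrum, a finite but not entirely transparent task (and one for which the Godsil--McKay machinery is unavailable). Second, and more central, is the input $\mu>0$ together with the variance bound in Step~3: one must verify the hypotheses of the relevant Drmota-type transfer theorem for the class of node-colored cotrees (strong connectedness/aperiodicity of the defining system, the square-root nature of the singularity, and that $\mathcal P$ contributes at first order), so that the pattern genuinely appears linearly often rather than only with positive probability. The remaining ingredients --- the substitution lemma, the canonical-cotree bookkeeping, and the passage to the ordinary spectrum --- are routine.
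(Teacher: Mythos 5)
Your overall architecture is the same as the paper's: a seed pair of generalized cospectral graphs, substitution of one for the other inside a module of a cograph justified by a Johnson--Newman regular orthogonal matrix (your $\widetilde Q=Q\oplus I$ computation is exactly the paper's Corollary~\ref{gcju}, extended to modules), and a P\'olya-type functional equation for cotrees showing the pattern occurs in almost every cograph (your system for $D$ and $C$ is equation~\eqref{htm}'s unperturbed analogue). However, the two points you yourself flag as ``obstacles'' are precisely where the proof lives, and both are left unproved. First, the seed pair is never exhibited: you only assert that a pair of non-isomorphic generalized cospectral cographs exists, with a vague appeal to Godsil--McKay switching that you concede does not apply. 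This finite input is essential and not transparent; the paper supplies it explicitly --- a $9$-vertex cograph whose generalized cospectral mate is found by exhaustive search (Figure~\ref{cm}) --- and, notably, the mate there is \emph{not} a cograph, which makes non-isomorphism immediate ($G'$ contains an induced $P_4$) and spares all of your canonical-cotree bookkeeping. Insisting that both members of the pair be cographs is more than Theorem~\ref{main2} requires (the smallest such pair has $15$ vertices and is only needed for the stronger ``mate within the family'' statement).

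Second, your Step~3 rests on an unverified Drmota-type transfer theorem giving $\mathbb{E}[X_n]\sim\mu n$ with $\mu>0$ and $\mathrm{Var}(X_n)=O(n)$; none of its hypotheses are checked, and this second-moment machinery is heavier than what is needed. The paper instead proves the needed statement directly: the generating function $H^{(m)}(x)$ of hierarchies avoiding a fixed size-$m$ subhierarchy satisfies the perturbed equation~\eqref{htm} (which, usefully, depends only on $m$), and an Otter-style singularity argument (boundedness at the singularity, the implicit-function condition $2b_m=1+\rho_m-\rho_m^m$, and the comparison $\exp\bigl(\sum_{k\ge1}H^{(m)}(\rho_m^k)/k\bigr)=2=\exp\bigl(\sum_{k\ge1}H(\rho_0^k)/k\bigr)$) shows its radius of convergence strictly exceeds that of $H(x)$, whence $H^{(m)}_n/H_n\to0$ with no variance estimate at all. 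There is also an unaddressed parity issue in your reduction: you require the pattern to hang below a union node, so your bivariate count must distinguish the $\cup/\vee$ labeling of the occurrence, which depends on the root label and depth parity; the paper sidesteps this by enumerating unlabeled hierarchies and using that the DGS property is complement-invariant, so an occurrence of the unlabeled pattern in the hierarchy suffices for either $G$ or $\overline{G}$, hence for both. As it stands, then, your proposal is a plausible outline that coincides with the paper at the routine steps but leaves both load-bearing steps (the explicit gadget and the asymptotic pattern-occurrence statement) as gaps.
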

We mention that  Schwenk's result also holds for the generalized spectrum, that is, almost all trees have a generalized spectral mate \cite{gm}.  The proof of Theorem \ref{main2} is based on the cotree representation of cographs and the theorem of Johnson and Newman concerning generalized cospectrality of graphs.  The overall strategy is similar to  Schwenk's proof and can be divided into two stages. In the first stage, we find a single pair of generalized cospectral graphs, one of which is a cograph.  In the second stage, we show that a sufficiently large cotree almost always  contains any given cotree as a subtree in an appropriate sense. In the last section of this paper, some comparative results are given between the family of cographs and the subfamily of threshold graphs.
\section{Cotree representation and generalized cospectrality}
A \emph{cotree} \cite{biyikoglu} is a rooted unordered tree in which no vertex has exactly one child, and all internal vertices (or non-leaf vertices) are labeled with the sign $\cup$ or $\vee$ such that adjacent ones have different labels. Let $T$ be a cotree and $u$ be any vertex of $T$. The subgraph (with $u$ as the root) induced by $u$ and all of its descendants, denoted by $T_u$, is clearly a cotree (where the labels are  inherited from $T$).  Every cotree $T$ defines a cograph naturally by the following three rules:

(\rmnum{1}) If $T$ has only one vertex (and hence no label),  then $T$ represents the single-vertex graph;

(\rmnum{2})  If the root is labeled with the sign $\cup$ and has children $u_1,u_2,\ldots,u_d$ with $d\ge 2$,  then $T$ represents the disjoint union $\bigcup_{i=1}^{d} G_i$, where $G_i$ is the cograph corresponding to the cotree $T_{u_i}$;

(\rmnum{3})  If the root is labeled with the sign $\vee$ and has children $v_1,v_2,\ldots,v_s$ with $s\ge 2$, then $T$ represents the join $\bigvee_{i=1}^{s} H_i$, where $H_i$ is the cograph corresponding to the cotree $T_{v_i}$.

It is known that the cotree representation of a cograph is unique, and two cographs are isomorphic if and only if they correspond to isomorphic cotrees. In order to construct graphs that are cospectral with a cograph, we introduce the notion of quasi-cotrees. A \emph{quasi-cotree} is obtained from a cotree by labeling exactly one external vertex (leaf vertex) with a sign $\star$, where $\star$ represents some graph that is not necessarily a cograph.  Similarly, we can define a graph by a quasi-cotree naturally, although different  quasi-cotrees may define the same graph.  See Figure \ref{qc} for an illustration.
	\begin{figure}
	\centering
	\includegraphics[height=3.5cm]{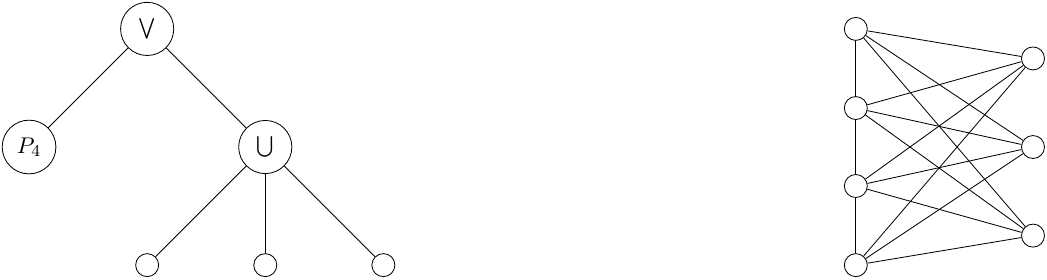}
	\caption{A quasi-cotree and the corresponding graph}
	\label{qc}
\end{figure}
 
 An orthogonal matrix is \emph{regular} if the sum of each row equals one. The following matrix characterization of generalized cospectrality is crucial.
\begin{theorem}[\cite{johnson1980JCTB}]\label{jn}
Let $G$ and $H$ be two graphs. Then $G$ and $H$ are generalized cospectral if and only if there exists a regular orthogonal matrix $Q$ such that $Q^\T A(G) Q=A(H)$.
\end{theorem}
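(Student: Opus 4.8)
The plan is to reconstruct the Johnson--Newman argument by proving the two implications separately. Throughout, write $A=A(G)$, $B=A(H)$, let $j$ denote the all-ones vector and $J=jj^{\T}$, and recall that a \emph{regular} orthogonal matrix $Q$ is precisely one with $Qj=j$, in which case $Q^{\T}j=Q^{-1}j=j$ as well. The reverse implication is immediate: if $Q^{\T}AQ=B$ for such a $Q$, then $Q^{\T}JQ=(Q^{\T}j)(Q^{\T}j)^{\T}=J$ and $Q^{\T}IQ=I$, so $Q^{\T}A(\overline{G})Q=Q^{\T}(J-I-A)Q=J-I-B=A(\overline{H})$; since orthogonal similarity preserves eigenvalues, $\Spec(G)=\Spec(H)$ and $\Spec(\overline{G})=\Spec(\overline{H})$, i.e.\ $G$ and $H$ are generalized cospectral.

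For the forward implication I would first translate generalized cospectrality into a statement about the \emph{walk moments} $w_{k}(G):=j^{\T}A^{k}j$. The matrix-determinant lemma applied to $xI-A(\overline{G})=-\bigl((zI-A)+jj^{\T}\bigr)$ with $z=-1-x$ gives
\[
\phi_{\overline{G}}(x)=(-1)^{n}\,\phi_{G}(-1-x)\Bigl(1+j^{\T}\bigl((-1-x)I-A\bigr)^{-1}j\Bigr),
\]
and since $j^{\T}(zI-A)^{-1}j=\sum_{k\ge0}w_{k}(G)\,z^{-k-1}$, this identity shows that the pair $\bigl(\phi_{G},(w_{k}(G))_{k\ge0}\bigr)$ determines $\phi_{\overline{G}}$, and is conversely recovered from $\phi_{G}$ and $\phi_{\overline{G}}$ by expanding the rational function $(-1)^{n}\phi_{\overline{G}}(-1-z)/\phi_{G}(z)$ at infinity. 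Hence $G$ and $H$ are generalized cospectral if and only if $\operatorname{tr}(A^{k})=\operatorname{tr}(B^{k})$ and $w_{k}(G)=w_{k}(H)$ for all $k\ge0$ (equivalently, for $0\le k\le 2n-2$).

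Granting this, I would build $Q$ from the main/non-main decomposition. Set $\mathcal{W}_{A}=\operatorname{span}\{A^{k}j:k\ge0\}$; it contains $j$ and is $A$-invariant, and since $A$ is symmetric so is $\mathcal{W}_{A}^{\perp}$, giving an orthogonal $A$-invariant splitting $\mathbb{R}^{n}=\mathcal{W}_{A}\oplus\mathcal{W}_{A}^{\perp}$, and likewise $\mathbb{R}^{n}=\mathcal{W}_{B}\oplus\mathcal{W}_{B}^{\perp}$. The shared moment sequence determines the common value $r=\dim\mathcal{W}_{A}=\dim\mathcal{W}_{B}$ (the largest $s$ for which the Hankel matrix $[w_{a+b}]_{0\le a,b<s}$ is nonsingular), the Gram matrix of the basis $j,Aj,\dots,A^{r-1}j$ of $\mathcal{W}_{A}$ (that same Hankel matrix), and the recurrence $A^{r}j=\sum_{l=0}^{r-1}c_{l}A^{l}j$ with its ``main minimal polynomial'' $m(x)=x^{r}-\sum_{l}c_{l}x^{l}$, the $c_{l}$ being the solution of a linear system in the moments. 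Because all of this coincides for $G$ and $H$, the assignment $A^{k}j\mapsto B^{k}j$ $(0\le k<r)$ extends to a linear isometry $\iota\colon\mathcal{W}_{A}\to\mathcal{W}_{B}$ with $\iota j=j$ and $\iota\,(A|_{\mathcal{W}_{A}})=(B|_{\mathcal{W}_{B}})\,\iota$ --- the intertwining being clear on $j,Aj,\dots,A^{r-2}j$ and a consequence of the common recurrence on $A^{r-1}j$. Since $A|_{\mathcal{W}_{A}}$ is cyclic with cyclic vector $j$, its characteristic polynomial equals $m$, so $\phi_{A|_{\mathcal{W}_{A}^{\perp}}}=\phi_{G}/m=\phi_{H}/m=\phi_{B|_{\mathcal{W}_{B}^{\perp}}}$; as these are symmetric operators on equidimensional spaces with equal characteristic polynomials, the spectral theorem yields an isometry $\kappa\colon\mathcal{W}_{A}^{\perp}\to\mathcal{W}_{B}^{\perp}$ intertwining $A|_{\mathcal{W}_{A}^{\perp}}$ and $B|_{\mathcal{W}_{B}^{\perp}}$. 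Then $P:=\iota\oplus\kappa$ is orthogonal, fixes $j$, and satisfies $PA=BP$; taking $Q:=P^{\T}$ (again regular orthogonal) gives $Q^{\T}AQ=B$.

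The part I expect to need the most care is the bookkeeping rather than the construction: verifying that the determinant identity faithfully and invertibly encodes the generalized spectrum as $(\phi_{G},\text{walk moments})$, and --- above all --- that each ingredient entering $\iota$ (the rank $r$, the Hankel/Gram matrix, the recurrence coefficients $c_{l}$, the polynomial $m$) is a function of the shared moment sequence alone, so that the two sides of $\iota$ are built from literally the same numbers and the intertwining really goes through. The facts used on $\mathcal{W}_{A}^{\perp}$ --- invariance of an orthogonal complement under a symmetric operator, and orthogonal similarity of symmetric matrices with equal characteristic polynomials --- are classical.
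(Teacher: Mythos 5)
The paper does not prove this statement---it is imported verbatim from Johnson and Newman's 1980 paper---so there is no in-text argument to compare against. Your reconstruction is correct and complete, and it is essentially the standard proof: the easy direction via $Q^{\T}JQ=J$ for regular orthogonal $Q$, and the hard direction via the determinant identity $\phi_{\overline{G}}(x)=(-1)^{n}\phi_{G}(-1-x)\bigl(1+j^{\T}((-1-x)I-A)^{-1}j\bigr)$ to reduce generalized cospectrality to equality of traces and walk moments, followed by the orthogonal splitting $\mathbb{R}^{n}=\mathcal{W}_{A}\oplus\mathcal{W}_{A}^{\perp}$ into the main (Krylov) part, where the shared Hankel data yields the intertwining isometry, and the non-main part, where equal characteristic polynomials of symmetric operators suffice. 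All the steps you flag as needing care (nonsingularity of the Hankel/Gram matrix up to rank $r$, the recurrence coefficients being determined by the moments, $m$ being both minimal and characteristic polynomial of the cyclic restriction) do go through as you describe, so no gap remains.
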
  
 \begin{corollary}\label{gcju}
 Let $G_1$ and $G_2$ be generalized cospectral graphs. Then for any graph $H$, the two graphs $G_1\cup H$ and $G_2\cup H$ are generalized cospectral. The same result also holds for the join operation.
  \end{corollary}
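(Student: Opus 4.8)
The plan is to invoke the Johnson--Newman characterization (Theorem \ref{jn}) and lift the conjugating matrix blockwise. Suppose $G_1$ and $G_2$ are generalized cospectral, say on $n$ vertices, and let $H$ have $m$ vertices. By Theorem \ref{jn} there is a regular orthogonal matrix $Q$ of order $n$ with $Q^\T A(G_1)Q = A(G_2)$. Set $\widetilde Q = Q \oplus I_m$ (block-diagonal). Then $\widetilde Q$ is orthogonal, being block-diagonal with orthogonal blocks, and it is regular, since each row of $Q$ sums to $1$ and each row of $I_m$ sums to $1$. Because $A(G_i \cup H) = A(G_i) \oplus A(H)$, we compute $\widetilde Q^\T A(G_1\cup H)\widetilde Q = (Q^\T A(G_1)Q)\oplus(I_m^\T A(H) I_m) = A(G_2)\oplus A(H) = A(G_2\cup H)$, and another application of Theorem \ref{jn} shows $G_1\cup H$ and $G_2\cup H$ are generalized cospectral.

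For the join, I would deduce it from the disjoint-union case by complementation. Generalized cospectrality is visibly preserved under taking complements: if $(\Spec(G_1),\Spec(\overline{G_1})) = (\Spec(G_2),\Spec(\overline{G_2}))$ then $(\Spec(\overline{G_1}),\Spec(G_1)) = (\Spec(\overline{G_2}),\Spec(G_2))$, so $\overline{G_1}$ and $\overline{G_2}$ are generalized cospectral. By the first part, $\overline{G_1}\cup\overline{H}$ and $\overline{G_2}\cup\overline{H}$ are generalized cospectral; complementing once more and using $G_i\vee H = \overline{\,\overline{G_i}\cup\overline{H}\,}$ gives that $G_1\vee H$ and $G_2\vee H$ are generalized cospectral.

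Alternatively one can argue the join case directly with the same $\widetilde Q = Q\oplus I_m$: here $A(G_i\vee H)$ is the block matrix with diagonal blocks $A(G_i), A(H)$ and off-diagonal block the all-ones matrix $J_{n\times m}$, and conjugating by $\widetilde Q$ replaces this off-diagonal block by $Q^\T J_{n\times m}$. Since a regular orthogonal $Q$ satisfies $Q\mathbf{1}_n = \mathbf{1}_n$ and hence also $Q^\T\mathbf{1}_n = Q^{-1}\mathbf{1}_n = \mathbf{1}_n$, we get $Q^\T J_{n\times m} = (Q^\T\mathbf{1}_n)\mathbf{1}_m^\T = J_{n\times m}$, so the block matrix is carried to $A(G_2\vee H)$.

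There is no serious obstacle here; the only points that need a moment's care are that the block-diagonal lift $Q\oplus I_m$ retains \emph{both} orthogonality and regularity, and, in the direct treatment of the join, that a regular orthogonal matrix fixes the all-ones vector on both sides. I would present the disjoint-union argument in full and obtain the join either by the short complementation reduction or by the one-line computation above.
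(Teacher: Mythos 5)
Your proof is correct and follows essentially the same route as the paper: the block-diagonal lift $Q\oplus I$ of the Johnson--Newman conjugating matrix for the disjoint union, and the reduction $G_i\vee H=\overline{\overline{G_i}\cup\overline{H}}$ for the join. The extra direct verification for the join via $Q^\T J=J$ is a fine (and correct) bonus but not needed.
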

  \begin{proof}
  	As $G_1$ and $G_2$ are generalized cospectral, Theorem \ref{jn} implies that there exists a regular orthogonal matrix $Q_0$ such that $Q_0^\T A(G_1)Q_0=A(G_2)$. Define the block diagonal matrix $Q=\diag[Q_0,I]$, where $I$ is the identity matrix of order $|V(H)|$. Clearly, $Q$ is a regular orthogonal matrix and $Q^\T A(G_1\cup H) Q=A(G_2\cup H)$. Using Theorem \ref{jn} again, we conclude that $G_1\cup H$ and $G_2\cup H$ are generalized cospectral.
  	
  	Note that two graphs are generalized cospectral if and only if their complements are generalized cospectral. Thus $G_1\vee H$ and $G_2\vee H$ are generalized cospectral as $G_i\vee H=\overline{\overline{G_i}\cup \overline{H}}$ for $i=1,2$. This completes the proof of Corollary \ref{gcju}.
    \end{proof}
    Figure \ref{cm} gives a pair of generalized cospectral graphs, where the left graph is a cograph (as it is $P_4$-free), whereas the right graph is not. We found this pair using an exhaustive search on all graphs of order $9$, and the left graph in Figure \ref{cm} is the unique (up to complement) cograph of order 9 that is not DGS. 
	\begin{figure}
	\centering
	\includegraphics[height=3.5cm]{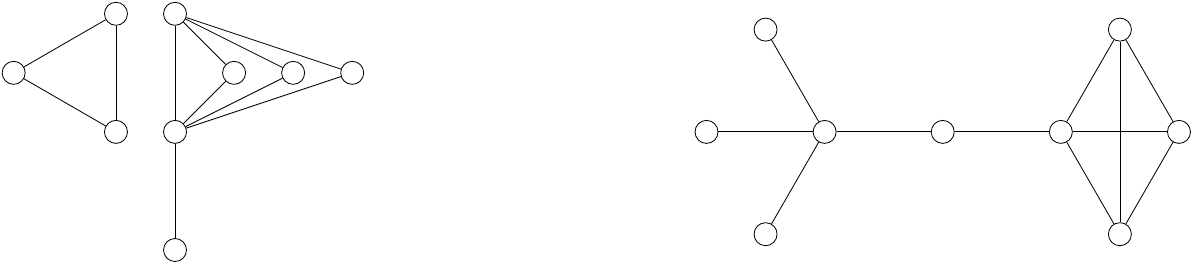}
	\caption{A non-DGS cograph and its generalized cospectral mate}
	\label{cm}
\end{figure}
	\begin{theorem}
		Let $T^*$ be the cotree representation of the left graph in Figure \ref{cm}. Suppose $G$ is a cograph and $T$ is its cotree representation. If there exists an internal vertex $u$ of $T$ such that the subtree $T_u$ is isomorphic to $T^*$, then $G$ has a generalized cospectral mate.
	\end{theorem}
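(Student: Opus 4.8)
The plan is to transplant the single non-DGS configuration of Figure \ref{cm} into $G$ along the cotree $T$, pushing generalized cospectrality outward step by step with Corollary \ref{gcju}. Write $G^{*}$ for the left graph of Figure \ref{cm} and $H^{*}$ for the right one; thus $G^{*}$ and $H^{*}$ are generalized cospectral but not isomorphic, $G^{*}$ is a cograph with cotree $T^{*}$, and $H^{*}$ is not a cograph, so it contains an induced $P_4$. Since $T_u\cong T^{*}$ and isomorphic cotrees represent isomorphic cographs, the cograph $G_u$ sitting at $u$ satisfies $G_u\cong G^{*}$.

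First I would isolate the part of $T$ below $u$. Let $\widetilde T$ be the quasi-cotree obtained from $T$ by deleting all proper descendants of $u$ and relabelling the (now leaf) vertex $u$ with the symbol $\star$. Feeding the cograph $G^{*}$ into the $\star$-slot recovers $G$, while feeding in the graph $H^{*}$ produces a graph $H$; this $H$ is our candidate generalized cospectral mate. (If $u$ is the root of $T$ then $\widetilde T$ is a single $\star$-vertex and $H=H^{*}$, and the argument below is vacuous.)

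Next I would verify that $H$ is generalized cospectral with $G$ by induction along the path $u=w_0,w_1,\dots,w_k$ from $u$ to the root of $T$. Writing $\circ_i\in\{\cup,\vee\}$ for the label of $w_i$, the cotree $T_{w_i}$ exhibits $G_{w_i}=G_{w_{i-1}}\circ_i F_i$, where $F_i$ is the fixed cograph assembled by the operation $\circ_i$ from the children of $w_i$ other than $w_{i-1}$; correspondingly set $H_{w_i}=H_{w_{i-1}}\circ_i F_i$, with $H_{w_0}=H^{*}$. Since $G_{w_0}=G^{*}$ and $H_{w_0}=H^{*}$ are generalized cospectral, Corollary \ref{gcju} applied with $F_i$ as the fixed graph gives, by induction, that $G_{w_i}$ and $H_{w_i}$ are generalized cospectral for every $i$; taking $i=k$ shows that $G$ and $H=H_{w_k}$ are generalized cospectral.

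Finally I would show $H\not\cong G$, which is the only point carrying content beyond bookkeeping. In both $A\cup B$ and $A\vee B$ the graph $A$ appears as an induced subgraph, so the same induction gives that $H^{*}=H_{w_0}$ is an induced subgraph of $H=H_{w_k}$. Hence $H$ contains an induced $P_4$ (namely one lying inside $H^{*}$), whereas $G$ is a cograph and therefore $P_4$-free; so $H$ is not isomorphic to $G$, and $H$ is a generalized cospectral mate of $G$. I expect the main obstacle to be purely organizational — setting up the quasi-cotree bookkeeping so that the ``replace $G^{*}$ by $H^{*}$'' operation and the ``an induced $P_4$ survives'' observation are both immediate; there is no genuine spectral difficulty left once Corollary \ref{gcju} is in hand.
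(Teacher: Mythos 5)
Your proposal is correct and follows essentially the same route as the paper: replace the subtree $T_u$ by a $\star$-leaf carrying the right graph of Figure \ref{cm}, propagate generalized cospectrality up to the root via iterated applications of Corollary \ref{gcju}, and distinguish the resulting graph from $G$ by the surviving induced $P_4$. Your write-up merely makes explicit the induction along the root path and the ``induced subgraph'' justification that the paper leaves as ``clearly''.
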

	\begin{proof}
		Let $T'$ be the quasi-cotree obtained from $T$ by replacing the subtree $T_u$ with a distinguished external vertex which represents the right graph in Figure \ref{cm}. Let $G'$ be the graph defined by the quasi-cotree. Clearly, $G'$ is not $P_4$-free and hence is not isomorphic to $G$. On the other hand, using Corollary \ref{gcju} iteratively, we easily see that $G$ and $G'$ are generalized cospectral. Thus, $G'$ is a generalized cospectral mate of $G$, completing the proof.
	\end{proof}

	\section{Asymptotic results on hierarchies}
	
	Let $G$ be a cograph and $T$ be its cotree. Then the structure of the cotree $\hat{T}$ corresponding to the complement $\overline{G}$ is immediate. Indeed, $\hat{T}$ is obtained from $T$ by interchanging all labels for the internal vertices. Note that for any cotree, the labels for all internal vertices are completely determined by the label of its root. Since the DGS-property of a graph is clearly unchanged under the operation of complement, the label of the root is inessential when we are only concerned with the DGS-property.  Following \cite{flajolet}, the underlying rooted tree of a cotree (i.e., removing all labels $\cup$ and $\vee$) is called a \emph{hierarchy}, whose \emph{size} refers to the number of external vertices (leaves).  Figure \ref{h2} illustrates a hierarchy of size $9$, together with two associated cotrees. Note that these two cotrees represent the cograph given in Figure \ref{cm} (left) and its complement, respectively.
	
	Let $\mathcal{H}$ be the combinatorial class (or simply class) of hierarchies, and $H(x)$ be the ordinary generating function of $\mathcal{H}$. Let $\mathcal{Z}$ be the atomic class comprising the single hierarchy of size 1. For a combinatorial class $\mathcal{A}$, the notation $\textup{MSET}(\mathcal{A})$ means the combinatorial class obtained by forming all finite multisets of elements from $\mathcal{A}$. Furthermore, for an integer $k$,  $\textup{MSET}_{\ge k}(\mathcal{A})$ is a subclass of $\textup{MSET}(\mathcal{A})$, meaning that only multisets with at least $k$ elements (from $\mathcal{A}$) are gathered.  Now, the class $\mathcal{H}$ of hierarchies can be  specified by 
	\begin{equation}
	\mathcal{H}=\mathcal{Z}+\textup{MSET}_{\ge 2}(\mathcal{H}).
	\end{equation}
	It follows that 
	\begin{equation}
	H(x)=x+\left(\exp\left(\sum_{k=1}^{\infty}\frac{1}{k}H(x^k)\right)-1-H(x)\right),
	\end{equation}
	or equivalently,
		\begin{equation}
		2H(x)+1-x=\exp\left(\sum_{k=1}^{\infty}\frac{1}{k}H(x^k)\right).
	\end{equation}
		\begin{figure}
		\centering
		\includegraphics[height=7.5cm]{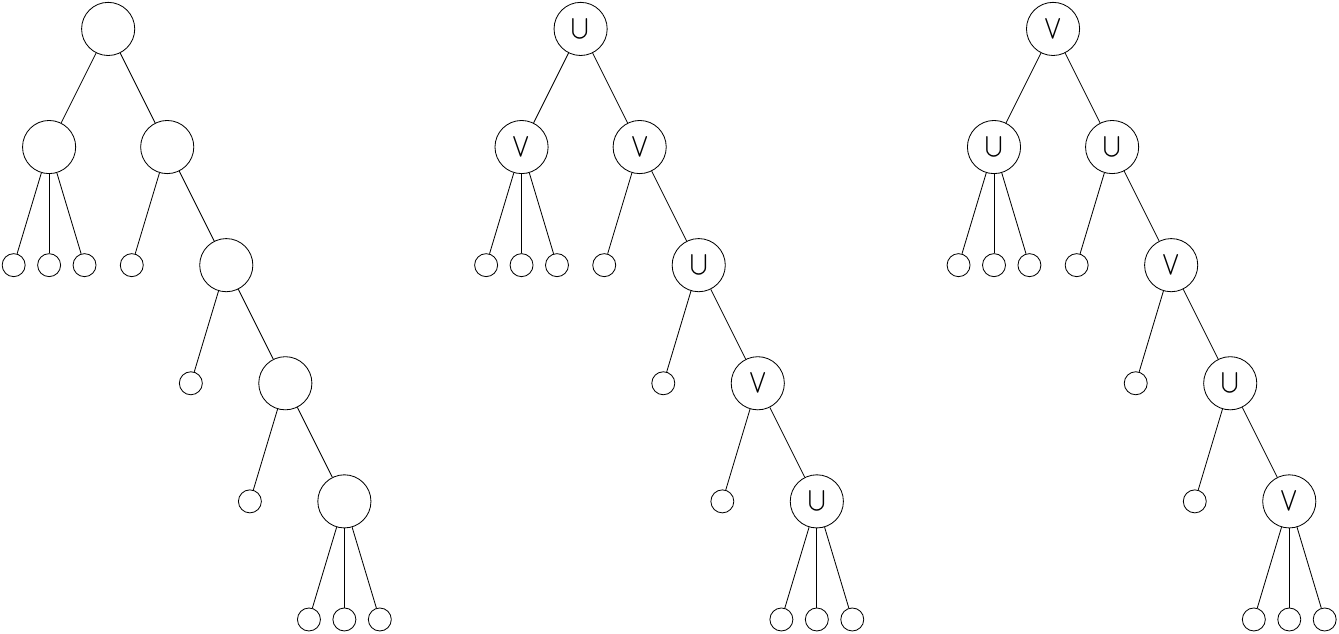}
		\caption{A hierarchy of size 9 and the two associated cotrees}
		\label{h2}
	\end{figure}
	(See \cite{cayley,flajolet} for details)
	
For a hierarchy $T$, a subhierarchy means the hierarchy induced by a vertex and all its descendants (if any). 	Let $T_m$ be any hierarchy of size $m$, where $m\ge 2$. We use $\mathcal{H}^{(T_m)}$ to denote the class of hierarchies that does not contain $T_m$ as a subhierarchy. Suppose that $v_1, u_2,\ldots,u_k$  are all children of the root of $T_m$. Let $S$ be the multiset of $k$ subhierarchies corresponding to $u_1,\ldots,u_k$. Noting that $k\ge 2$ and none of these $k$ hierarchies contains $T_m$ as a subhierarchy, we have $S\in \textup{MSET}_{\ge 2}(\mathcal{H}^{(T_m)})$. Moreover, it is not difficult to see that 
\begin{equation}\label{hm}
	\mathcal{H}^{(T_m)}=\mathcal{Z}+\left(\textup{MSET}_{\ge 2}(\mathcal{H}^{(T_m)})-\{S\}\right).
\end{equation}
Let $H^{(T_m)}(z)$ be the ordinary generating function of the class $\mathcal{H}^{(T_m)}$. Note that the size of $S$ is defined to be the sum of the sizes of the $k$ subhierarchies, which is $m$. This means that the generating function of the class $\{S\}$ is exactly $x^m$. It follows from \eqref{hm} that
\begin{equation*}
	H^{(T_m)}(x)=x+\left(\exp\left(\sum_{k=1}^{\infty}\frac{1}{k}H^{(T_m)}(x^k)\right)-1-H^{(T_m)}(x)-x^m\right),
\end{equation*}
i.e., 
\begin{equation}\label{htm}
	2H^{(T_m)}(x)+1-x+x^m=\exp\left(\sum_{k=1}^{\infty}\frac{1}{k}H^{(T_m)}(x^k)\right).
\end{equation}
We observe from Eq.~\eqref{htm} that the generating functions $H^{(T_m)}(z)$ only depend on the number $m$ but not on the structure of $T_m$.  In other words, if $T$ and $T'$ are two hierarchies of the same size $m$ with $m\ge 2$, then for any $n$, the number of hierarchies of the size $n$ that avoid $T$, is equal to  the number of hierarchies of the same size that avoid $T'$. Thus, in the following, we shall simplify the notation $H^{(T_m)}(x)$ as $H^{(m)}(x)$. 

Let $H_n$ and $H^{(m)}_n$ denote the coefficients of $x^n$ in $H(x)$ and $H^{(m)}(x)$, respectively. The asymptotic behavior of the sequences $\{H_n\}$ and $\{H_n^{(2)}\}$ was determined by Moon \cite{moon}; the asymptotic result for the sequence $\{H_n\}$ also appeared in \cite{ravelomanana}. For two sequences $\{a_n\}$ and $\{b_n\}$,   we write \(a_n \sim b_n\) to denote that they are asymptotically equivalent, i.e., $\lim_{n \to \infty} a_n/b_n = 1.$
\begin{proposition}[\cite{moon,ravelomanana}]\label{hh2}
The numbers $H_n$ and $H^{(2)}_n$ satisfy 
\begin{equation}
	H_n\sim C_0\rho_0^{-n}n^{-3/2} \quad \text{and}\quad H^{(2)}_n\sim C_2\rho_2^{-n}n^{-3/2}, 
\end{equation}
where $C_0=0.2063...$, $\rho_0=0.2808...$, $C_2=0.1972...$ and $\rho_2=0.3462...$. 
\end{proposition}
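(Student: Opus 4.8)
The plan is to prove both estimates by singularity analysis of the generating functions $H(x)$ and $H^{(2)}(x)$; I will describe the argument for $H(x)$, the case of $H^{(2)}(x)$ being identical after replacing $1-x$ by $1-x+x^2$ in the functional equation, i.e.\ after using \eqref{htm} with $m=2$. First I would record two elementary facts about $H(x)$. Since a hierarchy of size $n$ has at most $2n-1$ vertices, $H_n$ is bounded by the number of unlabelled rooted trees on at most $2n-1$ vertices, so $H_n$ grows at most geometrically and the radius of convergence $\rho_0$ of $H(x)$ is positive. Conversely, the binary hierarchies (internal vertices of degree exactly two, counted by the Wedderburn--Etherington numbers) already form a subclass whose counting sequence grows exponentially, so $0<\rho_0<1$; in particular $\sqrt{\rho_0}>\rho_0$.

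This last inequality drives everything. Setting $F(x):=\exp\bigl(\sum_{k\ge 2}\tfrac1k H(x^k)\bigr)$, the smallest disc of analyticity among the functions $H(x^k)$ with $k\ge 2$ is $|x|<\sqrt{\rho_0}$, which strictly contains $|x|\le\rho_0$; hence the sum, and therefore $F(x)$, is analytic in a neighbourhood of $\rho_0$ --- this is the classical P\'olya--Otter bootstrapping observation. Consequently the functional equation $2H(x)+1-x=e^{H(x)}F(x)$ can be written near $\rho_0$ as $\Phi(x,H(x))=0$ with $\Phi(x,w):=e^{w}F(x)-2w-1+x$, an implicit equation whose data are analytic (and have nonnegative Taylor coefficients in $x$) at $\rho_0$.

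Next I would invoke the smooth implicit-function schema for tree-like classes (Moon \cite{moon}; see also \cite{flajolet}). One first checks that $\tau:=H(\rho_0)<\infty$: were $H(x)\to\infty$ as $x\to\rho_0^-$, the relation $2H(x)+1-x=e^{H(x)}F(x)$ would force the left side to grow linearly and the right side exponentially in $H(x)$, a contradiction. Hence $(\rho_0,\tau)$ is a singular point of the branch $w=H(x)$, and the schema forces it to solve the characteristic system $\Phi(x,w)=0$, $\Phi_w(x,w)=0$, i.e.\ $e^{\tau}F(\rho_0)=2$ and $\tau=(1+\rho_0)/2$. There $\Phi_{ww}(\rho_0,\tau)=e^{\tau}F(\rho_0)=2\neq0$ and $\Phi_x(\rho_0,\tau)=e^{\tau}F'(\rho_0)+1\ge 1>0$, so Weierstrass preparation yields the square-root expansion
\[
H(x)=\tau-\gamma\sqrt{1-x/\rho_0}+O\!\left(1-x/\rho_0\right),\qquad \gamma=\sqrt{\frac{2\rho_0\,\Phi_x(\rho_0,\tau)}{\Phi_{ww}(\rho_0,\tau)}}>0.
\]
Since hierarchies of sizes $2$ and $3$ both exist, the class is aperiodic, $\rho_0$ is the unique dominant singularity, and $H(x)$ continues to a $\Delta$-domain; the Flajolet--Odlyzko transfer theorem then gives $H_n\sim \frac{\gamma}{2\sqrt{\pi}}\rho_0^{-n}n^{-3/2}$, which is the first assertion with $C_0=\gamma/(2\sqrt{\pi})$. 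Repeating the four steps for $H^{(2)}(x)$ with $\Phi^{(2)}(x,w)=e^{w}F^{(2)}(x)-2w-1+x-x^{2}$ gives the second assertion, and solving the two characteristic systems numerically produces the stated decimal values of $\rho_0,C_0,\rho_2,C_2$.

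The hard part will be the circularity inside the bootstrapping step: one must establish $0<\rho_0<1$ before being allowed to assert that $F$ is analytic past $\rho_0$, and one must then verify the non-degeneracy hypotheses of the implicit-function schema --- that the characteristic system actually has its relevant solution with $0<\rho_0<1$ and $0<\tau<\infty$, and that $\Phi_x,\Phi_{ww}$ are nonzero there --- since precisely these conditions guarantee a clean square-root singularity, hence the $n^{-3/2}$ subexponential factor, rather than a pole or an algebraic singularity of a different exponent. The local expansion, the transfer step, and the numerical evaluation are then routine.
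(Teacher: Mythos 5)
Your argument is correct and is essentially the route the paper itself indicates: Proposition \ref{hh2} is quoted from \cite{moon,ravelomanana}, whose proof (and the paper's own proof of Theorem \ref{rd}) is exactly this Otter--Moon bootstrapping --- $F(x)=\exp\bigl(\sum_{k\ge 2}H(x^k)/k\bigr)$ analytic past $\rho_0$ because $\rho_0<1$, the characteristic system $e^{\tau}F(\rho_0)=2$, $\tau=(1+\rho_0)/2$ forced by the implicit function theorem, and a square-root singularity yielding $n^{-3/2}$ via Theorem \ref{basic} (your use of the Flajolet--Odlyzko transfer in its place is an equivalent formulation). The only nit is the aperiodicity check for $H^{(2)}$: there is no hierarchy of size $2$ in $\mathcal{H}^{(2)}$ (the sequence begins $1,0,1,2,\dots$), so one should instead argue from nonzero coefficients at sizes $1$, $3$ and $4$, which changes nothing.
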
 
\begin{remark}\normalfont
	The two sequences $\{H_n\}$ and $\{H_n^{(2)}\}$ appear as OEIS A000669 and A058385, respectively \cite{oeis}. The first few items of $\{H_n\}$ and $\{H_n^{(2)}\}$ are
	$1$, $1$, $2$, $5$, $12$, $33$, $90$, $261$, $766$, $2312$, $7068$, $21965$, $68954$, $218751$, $699534$ and
	$1$, $0$, $1$, $2$, $4$, $9$, $20$, $47$, $112$, $274$, $678$, $1709$, $4346$, $11176$, $28966$.
\end{remark}
The main tool in proving Proposition \ref{hh2} is the following theorem.
\begin{theorem}[\cite{bender,harary}]\label{basic}
		Let $F(x,y)$ be analytic in each variable separately in some neighborhood of $(x_0,y_0)$ and suppose that the following conditions are satisfied:\\		
	\textup{(\rmnum{1})} $F(x_0,y_0)=0$;\\
	\textup{	(\rmnum{2})} $y=f(x)$ is analytic in $|x|<|x_0|$ and $x_0$ is the unique singularity on the circle of convergence;\\
	\textup{	(\rmnum{3})} if $f(x)=\sum_{n=0}^{\infty}f_nx^n$ is the expansion of $f$ at the origin, then $y_0=\sum_{n=0}^{\infty}f_nx_0^n$;\\
	\textup{	(\rmnum{4})} $\frac{\partial{F}}{\partial{y}}=0$;\\
	\textup{	(\rmnum{5})} $\frac{\partial^2 F}{\partial y^2}\neq 0$.
		
		Then $f(x)$ may be expanded about $x_0$:
		\begin{equation}
			f(x)=f(x_0)+\sum_{k=1}^{\infty}a_k(x_0-x)^{k/2}
		\end{equation}
		and if $a_1\neq 0$, 
		\begin{equation}
			f_n\sim \frac{-a_1}{2\sqrt{\pi}}x_0^{-n+1/2}n^{-3/2}.
		\end{equation}
	\end{theorem}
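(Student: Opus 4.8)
The plan is to prove the statement by \emph{singularity analysis}: conditions (i), (iv) and (v) should force the implicitly defined function $f$ to have a square-root branch point at $x_0$, and a transfer theorem then converts this local behaviour into the stated coefficient asymptotics. I would carry this out in three steps.

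First I would work out the local behaviour of $f$ near $x_0$. Writing the Taylor expansion $F(x,y)=\sum_{i,j\ge 0}c_{ij}(x-x_0)^i(y-y_0)^j$, hypothesis (i) gives $c_{00}=0$, hypothesis (iv) gives $c_{01}=0$, and hypothesis (v) gives $2c_{02}=F_{yy}(x_0,y_0)\ne 0$. Assuming also $F_x(x_0,y_0)=c_{10}\ne 0$ (a side condition usually accompanying (i)--(v)), the equation $F(x,y)=0$ reads $c_{10}(x-x_0)+c_{02}(y-y_0)^2+\cdots=0$, where the omitted terms all have weight strictly larger than $1$ once $(x-x_0)$ is assigned weight $1$ and $(y-y_0)$ weight $\tfrac12$. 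By the Weierstrass preparation theorem (equivalently, Newton--Puiseux) I would extract a solution branch
\begin{equation*}
f(x)=y_0+\sum_{k\ge 1}a_k(x_0-x)^{k/2},\qquad a_1^2=\frac{c_{10}}{c_{02}}=\frac{2F_x(x_0,y_0)}{F_{yy}(x_0,y_0)} ,
\end{equation*}
and I would pin down the sign of $a_1$ from the fact that $f$ has nonnegative Taylor coefficients, so that $f$ increases to $f(x_0)=y_0$ as $x\uparrow x_0$, forcing $a_1<0$. Hypotheses (ii) and (iii) are what guarantee that this local branch is the analytic continuation of the power series at the origin: (iii) says precisely that $\lim_{x\uparrow x_0}f(x)=y_0$, matching the centre of the expansion.

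Second, I would show that $f$ continues analytically to a slit disk (a $\Delta$-domain) of the form $\{|x|<x_0+\eta\}\setminus\{x:|x-x_0|<\delta,\ |\arg(x-x_0)|<\theta\}$: on the circle $|x|=x_0$ away from $x_0$, hypothesis (ii) together with the implicit function theorem (applicable wherever $F_y\ne 0$) supplies the continuation, while at $x_0$ itself the Puiseux expansion of the first step shows that the only obstruction is an algebraic branch point of square-root type. Third, I would transfer coefficients: from $[z^n](1-z)^{1/2}\sim n^{-3/2}/\Gamma(-1/2)=-n^{-3/2}/(2\sqrt{\pi})$ and $(x_0-x)^{1/2}=x_0^{1/2}(1-x/x_0)^{1/2}$, the $k=1$ term contributes $\frac{-a_1}{2\sqrt{\pi}}x_0^{-n+1/2}n^{-3/2}$, the $k=2$ term is analytic at $x_0$ (so its coefficients are exponentially smaller), the $k\ge 3$ terms give $O(x_0^{-n}n^{-5/2})$, and the error from truncating the singular expansion is absorbed by the $O$-transfer on the $\Delta$-domain; summing gives the claimed asymptotics. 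I expect the main obstacle to be the second step --- certifying that $f$ really does continue to a $\Delta$-domain whose only singularity is the square-root branch point, which is exactly where the uniqueness hypothesis (ii) is indispensable --- together with the small but genuine subtlety of selecting the correct Puiseux branch in step one.
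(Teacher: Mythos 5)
The paper does not prove this statement: it is quoted as a black box (essentially the Harary--Palmer form of Bender's Theorem 5) with the citations \cite{bender,harary}, so there is no in-paper argument to compare against. Judged on its own terms, your sketch is a correct route, but a genuinely different one from the cited sources: Bender and Harary--Palmer pass from the Puiseux expansion to the coefficient asymptotics by Darboux's method (subtract the leading singular terms, verify the remainder is smooth enough on the closed disk of convergence, and apply Riemann--Lebesgue), whereas you continue $f$ to a $\Delta$-domain and invoke Flajolet--Odlyzko transfer. Both give the $n^{-3/2}$ law; the transfer route is cleaner at the cost of having to certify the continuation beyond the circle of convergence, which you rightly identify as the delicate point. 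Note that at points $x_1\ne x_0$ of the circle, hypothesis (ii) already asserts analyticity of $f$ at $x_1$ (that is what ``not a singularity'' means), so the implicit function theorem is not needed there --- only compactness and the identity theorem to glue the local extensions on the simply connected slit disk; also, ``analytic in each variable separately'' upgrades to joint analyticity by Hartogs' theorem, which your preparation-theorem step tacitly uses.

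Two loose ends should be tightened. First, you assume $F_x(x_0,y_0)\ne 0$, which is not among (i)--(v); since $a_1^2=2F_x/F_{yy}$, this is precisely the condition $a_1\ne 0$ under which the asymptotic conclusion is asserted, so it is harmless for the second conclusion, but the first conclusion (the bare existence of an expansion in powers of $(x_0-x)^{1/2}$) must be proved without it. Weierstrass preparation still delivers it: write $F=u\cdot\bigl((y-y_0)^2+b(x)(y-y_0)+c(x)\bigr)$ with $u$ a unit; the discriminant $b^2-4c$ vanishes at $x_0$, and hypothesis (ii) forces it to vanish to odd order there (even order would make both solution branches, hence $f$, analytic at $x_0$, contradicting the fact that a power series has a singularity on its circle of convergence and (ii) places it at $x_0$). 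Second, your determination of the sign of $a_1$ from ``$f$ has nonnegative Taylor coefficients'' is not licensed by the hypotheses --- the theorem is stated for arbitrary analytic $F$ --- but it is also unnecessary: the conclusion is expressed in terms of $a_1$ whatever its sign, and all you need is that $f$ agrees with one of the two Puiseux branches as $x\uparrow x_0$, which follows from (iii) via Abel's theorem ($f(x)\to y_0$ radially) together with continuity of the branches. With those repairs the argument is sound.
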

Now we give the asymptotic result for $H_n^{(m)}$ for any $m\ge 2$, which will implies that $\lim_{n\to \infty}H_n^{(m)}/H_n=0$, and in particular $\lim_{n\to \infty}H_n^{(9)}/H_n=0$.  Indeed, for this purpose, it suffices to estimate the radius for $H^{(m)}(x)$.
\begin{theorem}\label{rd}
Let $m\ge 2$. Then the radius of convergence for $H^{(m)}(x)$ is larger than the radius for $H(x)$.
\end{theorem}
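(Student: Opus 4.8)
The plan is to read \eqref{htm} --- and its analogue $2H(x)+1-x=\exp(\sum_{k\ge1}\tfrac1kH(x^k))$ for $H$ --- as an implicit equation $\Phi(x,y)=0$, and then to show, via the analytic implicit function theorem together with Pringsheim's theorem, that $H^{(m)}$ continues analytically through the point $x=\rho_0$, where $\rho_0$ denotes the radius of $H$. Combined with the trivial bound $\rho_m\ge\rho_0$ for the radius $\rho_m$ of $H^{(m)}$ (which follows from the termwise domination $H_n^{(m)}\le H_n$), this will give $\rho_m>\rho_0$. First I would separate out the ``self-referential'' higher P\'olya terms: since $0<\rho_0<1$ by Proposition~\ref{hh2}, for each $k\ge2$ one has $\rho_0^{\,k}<\rho_0$, so $\phi(x):=\exp(\sum_{k\ge2}\tfrac1kH(x^k))$ is analytic on $|x|<\sqrt{\rho_0}$ and $\psi(x):=\exp(\sum_{k\ge2}\tfrac1kH^{(m)}(x^k))$ is analytic on $|x|<\sqrt{\rho_m}$, and both disks contain $[0,\rho_0]$ in their interior (because $\sqrt{\rho_m}\ge\sqrt{\rho_0}>\rho_0$). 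Thus near $\rho_0$ these are genuine known analytic functions, and $H,H^{(m)}$ satisfy $\Phi(x,H(x))=0$ and $\Phi_m(x,H^{(m)}(x))=0$ with
\[
\Phi(x,y)=2y+1-x-e^{y}\phi(x),\qquad \Phi_m(x,y)=2y+1-x+x^{m}-e^{y}\psi(x).
\]

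Next I would pin down the behaviour at $\rho_0$. From the asymptotics of Proposition~\ref{hh2}, $\sum_nH_n\rho_0^{\,n}<\infty$, so $\tau:=H(\rho_0)<\infty$. The key combinatorial input is the identity $H_m-H_m^{(m)}=1$: a size-$m$ hierarchy containing $T_m$ as a subhierarchy must equal $T_m$, because a proper subhierarchy has strictly smaller size. Consequently $H(x)-H^{(m)}(x)\ge x^{m}$ on $[0,\rho_0)$, so $\sigma:=\lim_{x\to\rho_0^-}H^{(m)}(x)$ exists and satisfies $\sigma\le\tau-\rho_0^{\,m}<\infty$. Moreover, since $\rho_0$ is a singularity of $H$ (Pringsheim) while $\Phi$ is analytic at $(\rho_0,\tau)$ and $\Phi(\rho_0,\tau)=0$, the implicit function theorem forces $\partial\Phi/\partial y\,(\rho_0,\tau)=0$, i.e.\ $e^{\tau}\phi(\rho_0)=2$; this is exactly the branch-point condition underpinning Proposition~\ref{hh2} (cf.\ Theorem~\ref{basic}\,(iv)).

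The decisive estimate is then immediate: using $\sigma\le\tau-\rho_0^{\,m}$ and $\psi(\rho_0)\le\phi(\rho_0)$ (termwise domination once more),
\[
e^{\sigma}\psi(\rho_0)\ \le\ e^{\tau-\rho_0^{\,m}}\phi(\rho_0)\ =\ e^{-\rho_0^{\,m}}\,e^{\tau}\phi(\rho_0)\ =\ 2e^{-\rho_0^{\,m}}\ <\ 2,
\]
hence $\partial\Phi_m/\partial y\,(\rho_0,\sigma)=2-e^{\sigma}\psi(\rho_0)\ge 2(1-e^{-\rho_0^{\,m}})>0$. Since also $\Phi_m(\rho_0,\sigma)=0$ by continuity and $\Phi_m$ is analytic at $(\rho_0,\sigma)$, the analytic implicit function theorem yields an analytic $\tilde y$ near $\rho_0$ with $\Phi_m(x,\tilde y(x))=0$ and $\tilde y(\rho_0)=\sigma$; by local uniqueness $\tilde y\equiv H^{(m)}$ just to the left of $\rho_0$, so $H^{(m)}$ is analytic at $\rho_0$. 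Therefore $\rho_0$ is not a singularity of $H^{(m)}$, whereas $\rho_m\ (\ge\rho_0)$ is a singularity by Pringsheim; hence $\rho_m>\rho_0$, as claimed.

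The step I expect to be the real obstacle is ensuring that $\partial\Phi_m/\partial y$ is \emph{strictly} positive at the limit point $(\rho_0,\sigma)$: a crude comparison of $\Phi_m$ with $\Phi$ only yields ``$\ge0$'', precisely because $\partial\Phi/\partial y\,(\rho_0,\tau)=0$ (this vanishing is what makes $\rho_0$ singular for $H$). The sharp combinatorial fact $H_m-H_m^{(m)}=1$ is exactly what resolves this: it supplies the concrete quantitative gap $H-H^{(m)}\ge x^{m}$, hence $\sigma\le\tau-\rho_0^{\,m}$, which degrades the equality $e^{\tau}\phi(\rho_0)=2$ to the strict inequality $e^{\sigma}\psi(\rho_0)<2$. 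A minor technical point, neutralized by $\rho_0<1$, is that the $k\ge2$ P\'olya terms are themselves built from the unknown functions yet stay analytic comfortably past $\rho_0$.
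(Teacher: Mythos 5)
Your argument is correct, and it uses the same Otter-style machinery as the paper (the functional equation \eqref{htm}, finiteness of the boundary value, vanishing of the $y$-derivative at a square-root singularity, and strict coefficient domination), but the endgame is genuinely different. The paper evaluates the branch-point condition at the \emph{unknown} radius: via Lemmas \ref{lcon} and \ref{rm} it shows $H^{(m)}(\rho_m)=(1+\rho_m-\rho_m^m)/2$, hence $\exp\bigl(\sum_{k\ge1}\tfrac1kH^{(m)}(\rho_m^k)\bigr)=2$, and likewise $\exp\bigl(\sum_{k\ge1}\tfrac1kH(\rho_0^k)\bigr)=2$; since $H^{(m)}$ is strictly dominated by $H$ coefficientwise from index $m$ on, the two equations cannot both hold with $\rho_m=\rho_0$. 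You instead work only at the \emph{known} radius $\rho_0$: you use the degeneracy $e^{\tau}\phi(\rho_0)=2$ for $H$ together with the quantitative gap $H-H^{(m)}\ge x^m$ (from the clean observation $H_m-H_m^{(m)}=1$) to show the implicit equation for $H^{(m)}$ is \emph{non}-degenerate at $(\rho_0,\sigma)$, so $H^{(m)}$ continues analytically past $\rho_0$ and Pringsheim forces $\rho_m>\rho_0$. Your route spares you the analogues of Lemmas \ref{lcon} and \ref{rm} for $H^{(m)}$ at $\rho_m$ (finiteness of $\sigma$ comes for free from $H^{(m)}\le H$) and yields the explicit margin $\partial\Phi_m/\partial y(\rho_0,\sigma)\ge 2(1-e^{-\rho_0^m})$, which in principle quantifies the separation of the radii; the paper's route has the advantage that the equation $\exp\bigl(\sum_{k\ge1}\tfrac1kH^{(m)}(\rho_m^k)\bigr)=2$ it derives is exactly what is used later for the numerical determination of $\rho_9$ and $C_9$. (Two trivial points you leave implicit but which are harmless: $\rho_m<\infty$, needed for Pringsheim, holds since $H^{(m)}_n\ge1$ for infinitely many $n$; and if $\rho_m=\infty$ the theorem is vacuously true anyway.)
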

\begin{proof}
We imitate Otter's method \cite{otter} as presented in \cite[Section 9.5]{harary} (see also \cite{harary1975}) for estimating the radius corresponding to ordinary trees. Let $\rho_m$ be the radius of convergence for $H^{(m)}(x)$. Since each coefficient satisfies $0\le H^{(m)}_n\le H_n$, we know that $\rho_m\ge \rho_0$, where $\rho_0=0.2808...$ is the radius of $H(x)$.
\begin{lemma}\label{lcon}
	The limit of $H^{(m)}(x)$ as $x$ approaches $\rho_m$ from the left exists and is equal to $\sum_{k=1}^{\infty}H_n^{(m)}\rho_m^k$.
\end{lemma}
\begin{proof}
	Since $H^{(m)}(x)$ satisfies the functional equation  \eqref{htm}, we have for all $x\in (0,\rho_m)$,
	\begin{equation}
		2H^{(m)}(x)+1-x+x^m=\exp\left(H^{(m)}(x)+H^{(m)}(x^2)/2+\cdots\right)>\exp\left(H^{(m)}(x)\right).
	\end{equation}
	Let $L=1+\rho_m+\rho_m^m$. Then clearly $L$ is an upper bound of the function $f(x)=1-x+x^m$ on the interval $(0,\rho_m)$. It follows that
	$$2H^{(m)}(x)+L>\exp\left(H^{(m)}(x)\right),~\textup{for} ~ x\in (0,\rho_m).$$
	Hence, $H^{(m)}(x)$ is bounded on the interval $(0,\rho_m)$. Since $H^{(m)}(x)$ is monotonic, the left-hand limit at $\rho_m$ exists, and we let $b_m$ denote this limit. It now follows quickly that $b_m=H^{(m)}(\rho_m)$.
\end{proof}
\begin{lemma}\label{rm}
	The series $H^{(m)}(x)$ satisfies 
\begin{equation}
	H^{(m)}(\rho_m)=\frac{1+\rho_m-\rho_m^m}{2}.
\end{equation}
	\end{lemma}
\begin{proof}
	The generating function $H^{(m)}(x)$  satisfies the functional relation $F(x,H^{(m)}(x))=0$, where
	\begin{equation}\label{Fxy}
		F(x,y)=\exp\left(y+\sum_{k=2}^{\infty}H^{(m)}(x^k)/k\right)-2y-1+x-x^m.
	\end{equation}
	We observe from Eq.~\eqref{htm} that  $y=H^{(m)}(x)$ is the unique analytic solution of $F(x,y)=0$ and it has a singularity at $x=\rho_m$. 
	On differentiating Eq.~\eqref{Fxy} with respect to $y$, we find
	$$\frac{\partial F}{\partial y}=F(x,y)+2y-1-x+x^m.$$
	By Lemma \ref{lcon}, we have $F(\rho_m,b_m)=F(\rho_m,H^{(m)}(\rho_m))=0$. Consequently, the partial derivative $\partial F/\partial y$ at $(\rho_m,b_m)$ is given by
	$$\frac{\partial F}{\partial y}(\rho_m,b_m)=2b_m-1-\rho_m+\rho_m^m.$$
	Furthermore, by the implicit function theorem and the established fact that $x=\rho_m$ is a singularity,  this partial derivative must be zero at $(\rho_m,b_m)$, i.e., $b_m=(1+\rho_m-\rho_m^m)/2.$ This completes the proof of Lemma \ref{rm}.
\end{proof}

	Lemma \ref{rm}, combining with Eq. \eqref{htm} for $x=\rho_m$, leads to 
	\begin{equation}
		\exp\left(\sum_{k=1}^{\infty}\frac{1}{k}H^{(m)}(\rho_m^k)\right)=2.
	\end{equation}
A	similar argument shows that the corresponding equation for $\rho_0$ is 
	\begin{equation}
		\exp\left(\sum_{k=1}^{\infty}\frac{1}{k}H(\rho_0^k)\right)=2.
	\end{equation} 
(See also \cite{moon}.)  Since both series have nonnegative coefficients, and since the coefficients $H^{(m)}_n=H_n$ for $n\le m-1$ and $H_n^{(m)}<H_n$ for $n\ge m$, we conclude that $\rho_m$ is strictly greater than $\rho_0$, completing the proof of Theorem \ref{rd}.
\end{proof}
We are now in a position to prove Theorem \ref{main2}.

\noindent\textbf{Proof of Theorem \ref{main2}}  Let $T_9$ be the hierarchy as shown in Figure \ref{h2}. Using Theorem \ref{rd} for $m=9$, we see that the coefficients $H_n^{(9)}$ have a slower rate of growth than the coefficients $H_n$. We conclude that
\begin{equation}
	\lim_{n\to \infty}\frac{H_n^{(9)}}{H_n}=0.
\end{equation}
Thus, among all hierarchies of size $n$, almost every hierarchy has $T_9$ as a subhierarchy (when $n$ approaches infinity). Let $K$ be such a hierarchy and $T$, $T'$ be two cotrees obtained from $H$. Suppose that $G$ and $\overline{G}$ are the two graphs represented by $T$ and $T'$. Let $T^*$ be the first cotree in Figure \ref{h2} which represents the non-DGS cograph in Figure \ref{cm}. Clearly, either $T$ or $T'$ will contain $T^*$ as a subtree. It follows from Theorem \ref{main2} that either $G$ or $\overline{G}$ will have a generalized mate. But if $H$ is a generalized cospectral mate of $G$, then $\overline{H}$ is a generalized cospectral mate of $\overline{G}$. This means that both $G$ and its complement $\overline{G}$ have a cospectral mate.
Therefore, almost all cographs have a cospectral mate. \hfill \qed

It should be mentioned that for cographs with a small number of vertices, only a very small portion has a cospectral  mate.  However, Theorem \ref{main} states that, asymptotically, this is the exception. In his classic paper \cite{schwenk} on trees, Schwenk estimated that one must have $n\ge 4919$ to be sure that more than half the trees have a cospectral mate. To obtain a similar estimation in the setting of cographs, one needs to obtain the  coefficients of the asymptotic formula $H^{(m)}_n\sim C_m \rho_m^{-n}n^{-3/2}$ (for $m=9$). Using a similar argument as in \cite{moon} together with the numerical algorithm \cite[\Rmnum{7}.21, p. 477]{flajolet}, one can estimate 
$C_9\approx0.2063663931885738$ and $\rho_9\approx 0.2808383687063348$, whereas $C_0\approx 0.2063814446007890$ and $\rho_0\approx0.2808326669842004$. (The values of $C_0$ and $\rho_0$ for the sequence $\{H_n\}$ (OEIS A000669) were previously reported by Vaclav Kotesovec.) Thus we have 
\begin{equation}
	\frac{H^{(9)}_n}{H_n} \sim \frac{C_9}{C_0}\left(\frac{\rho_0}{\rho_9}\right)^n\approx (0.999979697495892)^n.
\end{equation}
Our method guarantees that among all cographs of order $n$, about $1-H_n^{(9)}/H_n$ of them have a generalized cospectral mate. Therefore, we must have 
$$n\ge \frac{\ln 0.5}{\ln 0.999979697495892}\approx34141$$
to be sure that more than half the cographs have a cospectral mate.

\section{Comparing with threshold graphs}
Threshold graphs are an important and well-studied subclass of cographs.  A graph is a threshold graph if it can be constructed from the single-vertex graph by repeatedly adding either an isolated vertex or a dominating vertex. Recent studies suggest that, from the spectral point of view, there are many similarities between threshold graphs and cographs. For example, threshold graphs have no eigenvalues in the interval $(-1,0)$ \cite{jacobs2015}, and the same conclusion also holds for cographs \cite{mohammadian}. 

We are most interested in the spectral determination of threshold graphs and cographs. Currently, we do not know whether there exist two cospectral graphs such that one is a threshold graph but the other is not. However,  Theorem \ref{main} tells us that for cographs, almost all have a cospectral mate that is not a cograph. The following remarkable theorem of Lazzarin, M\'{a}rquez and Tura \cite{lazzarin} states that when we restrict to the family of threshold graphs, all  are distinguishable by their spectrum.  
\begin{theorem}[\cite{lazzarin}]
	No threshold graphs  have a cospectral mate in the family of threshold graphs.
\end{theorem}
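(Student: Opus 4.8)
The plan is to prove the stronger assertion that the adjacency spectrum is injective on isomorphism classes of threshold graphs; the theorem follows, since a threshold cospectral mate of a threshold graph would be a non-isomorphic threshold graph with the same spectrum. I would rely on two classical facts: a graph is a threshold graph if and only if it is the unique realisation of its degree sequence, so a threshold graph is determined up to isomorphism by its creation sequence; and the family of threshold graphs is closed under complementation. Recall the structure: a threshold graph $G$ on $n\ge 2$ vertices has a canonical ordered partition of $V(G)$ into nonempty modules $\mathcal M_1,\dots,\mathcal M_k$, of sizes $t_1,\dots,t_k$, whose types alternate between ``clique'' and ``independent'', with $\mathcal M_i$ completely joined to $\mathcal M_j$ (for $i<j$) exactly when $\mathcal M_i$ has clique type, and with $\mathcal M_1$ of clique type precisely when $G$ is connected; the data $\bigl(k,(t_1,\dots,t_k),\text{type of }\mathcal M_1\bigr)$ determines $G$ and is determined by it.

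Since this partition is equitable, $\Spec(G)$ is the disjoint union of $\Spec(B)$, where $B=B(t_1,\dots,t_k)$ is the $k\times k$ quotient (divisor) matrix --- of ``staircase'' shape, with $B_{ii}=t_i-1$ for clique modules and $0$ otherwise and $B_{ij}\in\{t_j,0\}$ off the diagonal dictated by the adjacency rule above --- together with $a:=\sum_{\mathcal M_i\ \mathrm{indep}}(t_i-1)$ eigenvalues equal to $0$ and $b:=\sum_{\mathcal M_i\ \mathrm{clique}}(t_i-1)$ eigenvalues equal to $-1$. Using the staircase form of $B$ and the known fact that threshold graphs have no eigenvalue in $(-1,0)$, I would show that $\Spec(B)$ consists of exactly (number of clique modules) values above $0$ and (number of independent modules) values below $-1$, with $0,-1\notin\Spec(B)$ --- except that $B$ acquires one extra, simple eigenvalue $0$ when $G$ is disconnected, contributed by the outermost independent module of isolated vertices. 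Hence for a connected threshold graph one reads off from $\Spec(G)$: $k$ (= the number of positive eigenvalues plus the number of eigenvalues below $-1$), then $a$ and $b$ (= the multiplicities of $0$ and $-1$), then $\Spec(B)$ (the remaining eigenvalues), the alternation of types being forced by connectedness. Thus the whole problem reduces to the single assertion $(\star)$: for connected threshold graphs, $\phi_B(x):=\det(xI-B)$ determines $(t_1,\dots,t_k)$. The disconnected case then falls out: writing $G=\overline{K_m}\cup G^c$ with $m\ge1$ and $G^c$ connected threshold, one has $\Spec(G)=\{0^{(m)}\}\sqcup\Spec(G^c)$, so a short computation with the above bookkeeping (using that cospectral graphs have equal order) shows that cospectral threshold graphs are both connected or both disconnected, and in the latter case that $\Spec(B(G^c))=\Spec(B(G'^c))$; then $(\star)$ gives $G^c\cong G'^c$, hence $m=m'$, hence $G\cong G'$.

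Proving $(\star)$ is the heart of the matter and the step I expect to be the main obstacle. I would argue by induction on $k$ via a ``peeling'' recursion: deleting the outermost module $\mathcal M_1$ replaces $B$ by the quotient matrix $B'$ of a smaller threshold graph, whose outermost module has the opposite (independent) type, so that $xI-B'$ automatically has a zero first row and column; a Schur-complement expansion along the first row and column of $xI-B$ then expresses $\phi_B(x)$ in terms of $\det(xI-B')$ and an auxiliary polynomial of the shape $(t_2,\dots,t_k)\,\mathrm{adj}(xI-B')\,(1,\dots,1)^{\T}$, and comparing the Laurent expansions at $x=\infty$ of $\phi_B$ and of these companions isolates $t_1$, then $\sum_{i\ge 2}t_i$, after which one iterates on the smaller graph. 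The delicacy is that $B$ is neither symmetric nor tridiagonal, so the staircase structure --- each clique module coupling with equal weights to all deeper modules --- must be used carefully to keep the recursion for $\phi_B$ (and for the auxiliary polynomials) in closed form; this is the only place where a genuine computation is required, and getting the recursion exactly right, so that the coordinates $t_i$ can be peeled off one at a time, is the crux of the whole proof. Once $(\star)$ is established, the theorem follows: cospectral threshold graphs yield $\phi_{B(G)}=\phi_{B(G')}$ after the trivial factors (which are visible in the spectrum) are stripped off, hence have the same module data and start type, hence are isomorphic.
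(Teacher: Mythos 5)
The paper does not prove this statement at all: it is imported verbatim from Lazzarin, M\'{a}rquez and Tura \cite{lazzarin}, so there is no in-paper argument to compare yours against. Judged on its own, your reduction is sensible and is in fact close in spirit to the published proof: the canonical twin-class (module) partition of a threshold graph is equitable, the spectrum splits as $\Spec(B)$ together with $0$'s and $-1$'s coming from duplicate vertices inside independent and clique cells, and since a threshold graph is the unique realisation of its degree sequence, everything reduces to recovering the cell-size vector $(t_1,\dots,t_k)$ from spectral data. Two of your intermediate claims, however, are stated as if they were routine when they are themselves substantive results that must be proved or cited: (a) that for a connected threshold graph the multiplicities of $0$ and $-1$ are \emph{exactly} $a$ and $b$, i.e.\ that $\Spec(B)$ avoids $\{0,-1\}$, and (b) that the inertia splits as (number of clique cells) positive eigenvalues and (number of independent cells) eigenvalues below $-1$. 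These are true (they appear in the threshold-graph spectral literature, cf.\ \cite{jacobs2015}), but the eigenvalue-free interval $(-1,0)$ alone does not give them, and without them your bookkeeping that extracts $k$, $a$, $b$ and $\Spec(B)$ from $\Spec(G)$ does not get off the ground. Your treatment of the disconnected case also silently assumes that cospectral threshold graphs have the same number of isolated vertices, which needs the same inertia facts.

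The genuine gap is $(\star)$. You have correctly isolated the crux --- that the characteristic polynomial of the $k\times k$ staircase quotient matrix determines $(t_1,\dots,t_k)$ --- but what you offer for it is a plan, not a proof. Knowing $\phi_B$ is knowing the power sums $\mathrm{tr}(B^j)$, which are complicated symmetric expressions in all the $t_i$ simultaneously; it is not evident that a Schur-complement expansion plus ``comparing Laurent expansions at infinity'' lets you peel off $t_1$ first and then recurse, because after deleting the outer cell the auxiliary polynomials you introduce are not themselves characteristic polynomials of the smaller graph's quotient, so the induction hypothesis does not directly apply to them. This injectivity statement is precisely the content of the Lazzarin--M\'{a}rquez--Tura paper and occupies essentially all of its technical effort; asserting that ``a genuine computation is required'' and that ``getting the recursion exactly right is the crux'' concedes rather than closes the gap. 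As written, the proposal is a correct reduction of the theorem to an unproven claim that is equivalent in difficulty to the theorem itself.
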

We show that the situation for cographs is quite different.
\begin{theorem}
	Almost all cographs have a generalized cospectral mate in the family of cographs.
\end{theorem}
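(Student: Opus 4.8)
\medskip
\noindent The plan is to rerun the two-stage argument behind Theorem~\ref{main2}, but with the cograph/non-cograph pair of Figure~\ref{cm} replaced by a pair of \emph{genuine} cographs. For the first stage I would exhibit, by an exhaustive computer search (necessarily over graphs on more than $9$ vertices, since the order-$9$ non-DGS cograph of Figure~\ref{cm} has only a non-cograph mate), a pair $G_1,G_2$ of non-isomorphic cographs that are generalized cospectral and whose cotrees $T_1,T_2$ have the \emph{same} root label; equivalently, $G_1$ and $G_2$ are both connected or both disconnected. This matching of root labels is the crucial extra requirement, because it is what makes the local cotree surgery below produce again a legitimate cotree, hence a cograph. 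Write $m$ for the common order of $G_1$ and $G_2$, and $\mathcal{K}$ for the hierarchy underlying $T_1$.

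\medskip
\noindent For the second stage, recall that a cograph on $n\ge 2$ vertices is precisely a hierarchy of size $n$ equipped with a choice of root label, and that distinct such choices yield non-isomorphic cographs; hence there are exactly $2H_n$ cographs of order $n$, of which at most $2H^{(m)}_n$ have underlying hierarchy avoiding $\mathcal{K}$. Applying Theorem~\ref{rd} with this $m$ (indeed $m\ge 2$), we obtain $\rho_m>\rho_0$, and therefore $H^{(m)}_n/H_n\to 0$, exactly as in the proof of Theorem~\ref{main2}. Consequently, all but a vanishing proportion of cographs of order $n$ have a cotree that contains, as the subtree rooted at some internal vertex $u$, a labelling of $\mathcal{K}$.

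\medskip
\noindent It then remains to show that any such cograph $G$ has a cograph generalized cospectral mate. Let $T$ be the cotree of $G$ and $u$ an internal vertex as above. The labels that $T$ induces on the subtree $T_u$ make it the cotree of either $G_1$ or $\overline{G_1}$ (which of the two is forced by the depth of $u$ together with the root label of $T$); after replacing the pair $(G_1,G_2)$ by $(\overline{G_1},\overline{G_2})$ if necessary --- still a non-isomorphic generalized cospectral pair whose cotrees share a root label --- we may assume $T_u$ is the cotree of $G_1$. Let $T'$ be obtained from $T$ by replacing $T_u$ with the cotree of $G_2$. Since the root label of $T_2$ equals that of $T_1$, which equals the label of $u$ in $T$, the tree $T'$ is again a legitimate cotree and hence represents a cograph $G'$. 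Applying Corollary~\ref{gcju} repeatedly, working upward from $u$ to the root of $T$ (at each step taking the union, resp.\ join, of the already-modified part with the unchanged remaining subtrees), shows that $G$ and $G'$ are generalized cospectral. Finally $G'\not\cong G$: by uniqueness of the cotree representation this amounts to $T'\not\cong T$, and this holds because $T_1$ and $T_2$ have equally many leaves but are non-isomorphic, so the surgery strictly decreases the number of internal vertices $v$ with $T_v\cong T_1$ --- no new copy of $T_1$ can be created, since each rooted subtree of $T'$ not already present in $T$ either lies properly inside the inserted copy of $T_2$ (hence has fewer than $m$ leaves) or properly contains it (hence has more than $m$ leaves). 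Thus the fraction of cographs of order $n$ lacking a cograph generalized cospectral mate is at most $H^{(m)}_n/H_n\to 0$, and the theorem follows.

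\medskip
\noindent The asymptotic ingredients are already in hand from Theorem~\ref{rd}, so the heart of the matter --- and the main obstacle --- is the first stage. Unlike in Theorem~\ref{main2}, where $P_4$-freeness automatically separated the cograph from its mate, here the mate must itself be a cograph, so one must actually locate a pair of honest cographs that are generalized cospectral and whose cotree root labels agree (so that the local replacement stays inside the class of cographs); the existence of such a pair is not evident a priori, and I would settle it by computation, in the same spirit as the order-$9$ example found earlier.
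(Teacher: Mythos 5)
Your proposal follows essentially the same route as the paper: locate, by exhaustive computation, a pair of non-isomorphic generalized cospectral cographs, then rerun the avoidance asymptotics of Theorem~\ref{rd} and the cotree-surgery argument from the proof of Theorem~\ref{main2}. The one ingredient you leave open --- the actual existence of such a pair --- is exactly what the paper supplies by computation (the minimal pair occurs at $15$ vertices, so Theorem~\ref{rd} is applied with $m=15$); your extra care about matching root labels (handled in the paper implicitly by allowing passage to complements) and your counting argument for $T'\not\cong T$ are sound and, if anything, more detailed than the paper's ``same argument as before.''
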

\begin{proof}
	The proof follows the same line as the proof of Theorem \ref{main2}. The only difficulty here is to find a pair of cospectral graphs in the family of cographs. With the computational implementation of the cograph generator (which is based on the paper \cite{jones} and can be accessed from github.com/atilaajones/CographGeneration), we find that the minimal pair of non-isomorphic  generalized cospectral cographs has 15 vertices. Among 1,399,068 cographs of order 15, there are essentially only $2$ pairs of generalized cospectral mates (the other 2 pairs are obtained by taking complements). Figure \ref{c15} shows one such pair of graphs with graph6 strings  \texttt{N]?GWWGAGP@FAMAM@F?} and \texttt{Ns\_??KF@oK?p@a@b\_po}. By Theorem \ref{rd} for $m=15$,
	the radius of convergence for $H^{(15)}(x)$ is larger than the radius for $H(x)$. Thus, the same argument as in the proof of Theorem \ref{main2} completes the proof of this theorem.
\end{proof}
	\begin{figure}
	\centering
	\includegraphics[height=5cm]{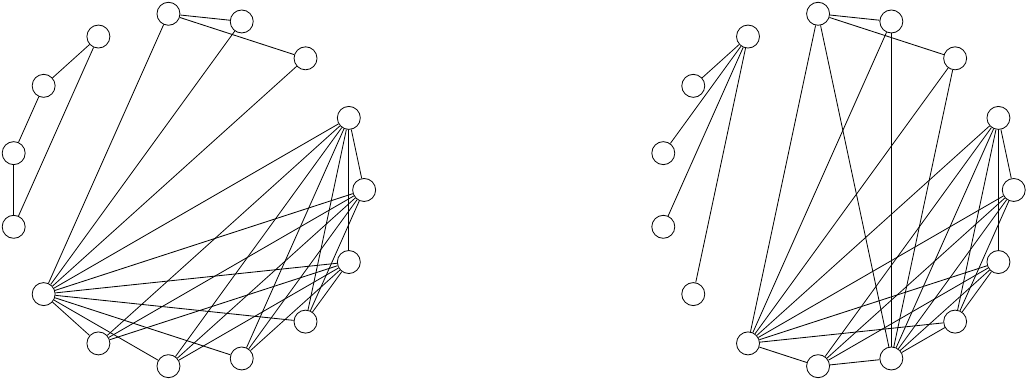}
	\caption{A pair of minimal generalized cospectral cographs}
	\label{c15}
\end{figure}
\begin{remark}\normalfont
	We can obtain $\rho_{15}\approx0.2808326697806751$ and hence \begin{equation}
		\frac{\rho_0}{\rho_{15}}\approx\frac{0.2808326669842004}{0.2808326697806751}\approx0.999999990042203
	\end{equation}which is very close to 1. Thus, using our method of proof, to guarantee that more than half the cographs have a generalized cospectral mate which is also a cograph, the number of vertices should satisfy
	\begin{equation}
		n\ge \frac{\ln 0.5}{\ln 0.999999990042203 }\approx 6.96\times 10^{7}.
	\end{equation} 
\end{remark}
For a graph, the \emph{signless Laplacian matrix} (or $Q$-matrix) is defined to be the sum of the adjacency matrix and the degree matrix. Although it is believed that the $Q$-spectrum is usually more powerful to distinguish graphs than the ordinary (adjacency) spectrum \cite{cvetkovic},  the result is very different for the family of threshold graphs. Indeed,  Carvalho et al. \cite{carvalho} give a simple positive lower bound on the proportion of threshold graphs that have a $Q$-cospectral mate.
\begin{theorem}[\cite{carvalho}]\label{qt}
	For each $n\ge 4$, at least $1/8$ of the threshold graphs of order $n$ have a $Q$-cospectral mate.
\end{theorem}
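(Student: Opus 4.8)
\noindent\textbf{Plan of proof of Theorem~\ref{qt}.}
The plan is to find, for an explicit family of threshold graphs that makes up a $1/8$ fraction of all threshold graphs of order $n$, a non-isomorphic threshold graph with the same signless Laplacian spectrum; the family is ``everything built on top of a copy of $K_{1,3}$'', and the mate is obtained by replacing that $K_{1,3}$ with $K_3\cup K_1$, the classical smallest $Q$-cospectral pair. Recall that a threshold graph on $n$ vertices is built from a single vertex by $n-1$ successive additions of an isolated or a dominating vertex, encoded by a \emph{creation sequence} $b_2\cdots b_n\in\{0,1\}^{n-1}$ (with $b_j=1$ marking a dominating step); this sequence is a complete isomorphism invariant, so there are exactly $2^{n-1}$ threshold graphs of order $n$. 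If $b_2b_3b_4=001$, then the first four vertices induce $K_{1,3}$, and each later vertex is adjacent either to all four of them (when its step is dominating) or to none of them (when isolated); hence $G=\phi_{b_n}\!\circ\cdots\circ\phi_{b_5}(K_{1,3})$, where $\phi_0(H)=H\cup K_1$ and $\phi_1(H)$ is obtained from $H$ by adding a dominating vertex. Writing $G'$ for the threshold graph with creation sequence $110\,b_5\cdots b_n$, the same description gives $G'=\phi_{b_n}\!\circ\cdots\circ\phi_{b_5}(K_3\cup K_1)$. There are $2^{n-4}$ such pairs $(G,G')$ (for $n=4$, the single pair $(K_{1,3},K_3\cup K_1)$), so it suffices to show that $G$ and $G'$ are $Q$-cospectral and non-isomorphic.

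First I would settle the seed pair. A direct computation gives that $K_{1,3}$ and $K_3\cup K_1$ share the $Q$-characteristic polynomial $x(x-1)^2(x-4)$, and that $\mathbf{1}^{\T}Q(K_{1,3})^j\mathbf{1}=\mathbf{1}^{\T}Q(K_3\cup K_1)^j\mathbf{1}$ for $j=0,1,2$, hence for all $j\ge 0$ since both minimal polynomials equal $x(x-1)(x-4)$. From equal spectra together with equal moments against $\mathbf{1}$, a standard argument (matching the orthogonal projections of $\mathbf{1}$ onto the eigenspaces, eigenspace by eigenspace) produces a \emph{regular} orthogonal matrix $R_0$ of order $4$ — one with all row sums equal to $1$, equivalently $R_0\mathbf{1}=\mathbf{1}$ — such that $R_0^{\T}Q(K_{1,3})R_0=Q(K_3\cup K_1)$. (Alternatively one may invoke the signless Laplacian analogue of Theorem~\ref{jn}.)

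The second ingredient is a one-step propagation lemma: if $R$ is a regular orthogonal matrix of order $k$ with $R^{\T}Q(H_1)R=Q(H_2)$, then $\diag[R,1]$ is a regular orthogonal matrix of order $k+1$ with $\diag[R,1]^{\T}Q(\phi(H_1))\,\diag[R,1]=Q(\phi(H_2))$ for each $\phi\in\{\phi_0,\phi_1\}$. For $\phi_0$ this is immediate from $Q(H\cup K_1)=\diag[Q(H),0]$. For $\phi_1$ it follows from $Q(\phi_1(H))=\bigl(\begin{smallmatrix}Q(H)+I & \mathbf{1}\\ \mathbf{1}^{\T} & k\end{smallmatrix}\bigr)$ by a block computation using $R^{\T}Q(H_1)R=Q(H_2)$, $R^{\T}R=I$, and $R^{\T}\mathbf{1}=\mathbf{1}$ (the last because $R$ is regular and orthogonal). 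Iterating the lemma from $R_0$ yields a regular orthogonal $R$ with $R^{\T}Q(G)R=Q(G')$, so $G$ and $G'$ are $Q$-cospectral; and $G\not\cong G'$ because their creation sequences differ. Since the $2^{n-4}$ graphs $G$ all have a $Q$-cospectral mate, at least $2^{n-4}/2^{n-1}=1/8$ of the threshold graphs of order $n$ do; in fact, running the same argument for the $110$-prefixed graphs (a disjoint family) shows the fraction is at least $1/4$.

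I expect the genuine content to be concentrated in the seed step. It is crucial that $K_{1,3}$ and $K_3\cup K_1$ are not merely $Q$-cospectral but related by a \emph{regular} orthogonal matrix: the operation $\phi_1$ of adding a dominating vertex does not preserve $Q$-cospectrality in general, because the characteristic polynomial of $\bigl(\begin{smallmatrix}Q(H)+I & \mathbf{1}\\ \mathbf{1}^{\T} & k\end{smallmatrix}\bigr)$ depends on the pair $(Q(H),\mathbf{1})$ and not on the spectrum of $Q(H)$ alone. This is exactly why one has to verify agreement of the moments $\mathbf{1}^{\T}Q^j\mathbf{1}$ (not just of the spectra) for the pair $(K_{1,3},K_3\cup K_1)$, and why the induction must be carried along regular orthogonal similarities; everything else is bookkeeping about creation sequences together with the two-line block-matrix identity above.
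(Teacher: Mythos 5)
Your proof is correct, and it is essentially the argument the paper has in mind: the paper only cites Theorem~\ref{qt} from \cite{carvalho}, but in the following paragraph it sketches exactly your simplification, namely propagating the seed pair $K_{1,3}$, $K_3\cup K_1$ (which are complements of each other, hence trivially generalized $Q$-cospectral) through the threshold construction via regular orthogonal similarity, using the $Q$-analogue of the Johnson--Newman theorem. Your writeup supplies the details (creation-sequence bookkeeping, the moment check $\mathbf{1}^{\T}Q^j\mathbf{1}$, and the block identity for adding a dominating vertex) correctly, and the observation that the $110$-prefixed family doubles the bound to $1/4$ is a valid bonus.
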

We say that two graphs are generalized $Q$-cospectral if they are $Q$-cospectral and their complements are also  $Q$-cospectral. It is known that the theorem of Johnson-Newman also holds in the settings of generalized $Q$-spectra \cite{qiu2019}. From this point of view, we can simplify the argument of the cospectrality in \cite{carvalho}, and moreover, we conclude that the $Q$-cospectral mates constructed there  are indeed generalized $Q$-cospectral.  However, the asymptotic analysis does not apply to threshold graphs since  cotrees corresponding to threshold graphs are rather restricted. The number $1/8$ in Theorem \ref{qt} seems to be best possible. However, for cographs we have the following result.
\begin{theorem}
	Almost all cographs have a generalized $Q$-cospectral mate.
\end{theorem}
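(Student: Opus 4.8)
\noindent The plan is to re-run the proof of Theorem \ref{main2} with ``generalized cospectral'' replaced throughout by ``generalized $Q$-cospectral''. Three ingredients are required. The first is a $Q$-analogue of Corollary \ref{gcju}. Recall that, by definition, $G$ and $H$ are generalized $Q$-cospectral exactly when $\overline{G}$ and $\overline{H}$ are, and that the Johnson--Newman theorem (Theorem \ref{jn}) has an exact analogue for the generalized $Q$-spectrum \cite{qiu2019}: graphs $G$ and $H$ are generalized $Q$-cospectral if and only if $U^{\T}Q(G)U=Q(H)$ for some regular orthogonal matrix $U$. Granting this, the proof of Corollary \ref{gcju} transfers word for word: given $U_0^{\T}Q(G_1)U_0=Q(G_2)$ with $U_0$ regular orthogonal, the block matrix $\diag[U_0,I]$ is regular orthogonal and conjugates $Q(G_1\cup H)=\diag[Q(G_1),Q(H)]$ to $Q(G_2\cup H)$, so $G_1\cup H$ and $G_2\cup H$ are generalized $Q$-cospectral whenever $G_1$ and $G_2$ are; the join case follows from $G_i\vee H=\overline{\overline{G_i}\cup\overline{H}}$ together with the fact that complementation preserves the generalized $Q$-cospectrality relation.

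The second ingredient is a single explicit pair of generalized $Q$-cospectral graphs with a cograph among them, and I would take $G_1=K_{1,3}$ and $G_2=K_3\cup K_1$. Both have $Q$-spectrum $\{4,1,1,0\}$, and, since $\overline{K_{1,3}}=K_3\cup K_1$ and $\overline{K_3\cup K_1}=K_{1,3}$, their complements have equal $Q$-spectra as well; hence $G_1$ and $G_2$ are generalized $Q$-cospectral, they are visibly non-isomorphic, and both are cographs (in fact threshold graphs --- this is essentially the seed pair behind Theorem \ref{qt}). Their cotrees have the same underlying hierarchy $h^{*}$ of size $m=4$, namely a root with one leaf child and one child carrying three leaf children, but with opposite root labels: labelling the root $\vee$ gives $K_{1,3}$ while labelling it $\cup$ gives $K_3\cup K_1$. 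This opposition of root labels is the structural feature that makes the eventual non-isomorphism step clean.

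With these in place the argument closes exactly as in the proof of Theorem \ref{main2}. Applying Theorem \ref{rd} with $m=4$, the radius of convergence of $H^{(4)}(x)$ strictly exceeds that of $H(x)$, so $H^{(4)}_n/H_n\to 0$. Since a cograph is determined up to complementation by the underlying hierarchy of its cotree --- and this hierarchy contains $h^{*}$ as a subhierarchy or not irrespective of which of the two complementary cographs one takes --- the proportion of cographs of order $n$ whose cotree-hierarchy avoids $h^{*}$ is $O(H^{(4)}_n/H_n)\to 0$. Thus for almost every cograph $G$ of order $n$ there is a vertex $u$ of its cotree $T$ whose subhierarchy is $h^{*}$, so the cograph represented by $T_u$ is $K_{1,3}$ or $K_3\cup K_1$; replace it by the other member of the pair in the cotree construction of $G$ and call the resulting cograph $G'$. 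Iterating the $Q$-analogue of Corollary \ref{gcju} along the path from $u$ to the root of $T$ shows that $G$ and $G'$ are generalized $Q$-cospectral. For $n\ge 5$ the vertex $u$ is not the root of $T$, so the label of its parent is forced to equal the root label of the cotree of the substituted graph; since a valid cotree has no two adjacent internal vertices with the same label, these two vertices are merged, and because the cotrees of $G_1$ and $G_2$ each have exactly two internal vertices, the merge leaves the cotree of $G'$ with exactly one fewer internal vertex than the cotree of $G$. Hence $G'\not\cong G$, and $G'$ is a generalized $Q$-cospectral mate of $G$; therefore almost all cographs have such a mate.

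I do not expect a genuinely hard step. The two points deserving care are: (a) that the gluing lemma survives the passage to the generalized $Q$-spectrum, which it does precisely because of the $Q$-analogue of the Johnson--Newman theorem \cite{qiu2019} together with the trivial closure of generalized $Q$-cospectrality under complementation; and (b) that $G$ and $G'$ are non-isomorphic, which --- unlike in Theorem \ref{main2}, where it came for free from the substituted graph failing to be $P_4$-free --- here, both members of the seed pair being cographs, must instead be extracted from the drop in the internal-vertex count of the cotree caused by the forced merge of two like-labelled vertices.
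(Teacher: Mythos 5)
Your proposal is correct and follows essentially the same route as the paper: the seed pair $K_{1,3}$ and its complement $K_3\cup K_1$, Theorem \ref{rd} with $m=4$, and the $Q$-analogue of Theorem \ref{jn} from \cite{qiu2019} feeding the gluing argument of Corollary \ref{gcju}. Your cotree-merging argument for $G'\not\cong G$ (via the drop in the internal-vertex count, needed because both seed graphs are cographs and the $P_4$-freeness trick is unavailable) is a correct filling-in of a detail the paper leaves implicit.
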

\begin{proof}
	The proof is direct. Note that the minimal pair of generalized $Q$-cospectral mates consists of the star graph $K_{1,3}$ and its complement, both of which are cographs.  Using Theorem \ref{rd} for $m=4$ we see that the radius of convergence of $H^{(4)}(x)$ is larger than the radius for $H(x)$. Noting that Theorems \ref{jn} and \ref{gcju} also hold for generalized $Q$-spectrum, we are done by a similar argument as in the proof of Theorem \ref{main2}.
\end{proof}

\section*{Acknowledgments}
This work is partially supported by the National Natural Science Foundation of China (Grant No. 12001006), Wuhu Science and Technology Project, China (Grant
No. 2024kj015). We would like to thank Vaclav Kotesovec for his guidance on the computation of the asymptotic coefficients. We also thank MathOverflow user AnttiP for their  helpful program, which provided experimental verification of our result during the preparation of this manuscript.


\begin{thebibliography}{99}
\bibitem{allem}		L.~E.~Allem, G. Chen,  F.~C.~Tura, 	Regular cograph is determined by its spectrum, manuscript, 2025. https://math.gsu.edu/gchen/research.html.
	
\bibitem{bender} E.~A.~Bender, Asymptotic methods in enumeration, SIAM Review 16 (1974) 485--515.

\bibitem{biyikoglu} T.~B{\i}y{\i}ko\u{g}lu, S.~K.~Simi\'{c}, Z.~Stani\'{c}, Some notes on spectra of cographs,	Ars Combin., 100 (2011) 421--434.
	
\bibitem{cayley} A. Cayley, On the theory of the analytical forms called trees, Phil. Mag. 13 (1857) 172--176.
		
\bibitem{corneil}D.~G.~Corneil, H.~Lerchs, B.~L.~Stewart, Complement reducible graphs, Discrete Appl. Math. 3 (3) (1981) 163--174.


\bibitem{cvetkovic} D.~Cvetkovi\'{c}, P.~Rowlinson,  S.~K.~Simi\'{c}, Signless Laplacians of finite graphs, Linear Algebra  Appl. 423 (1) (2007)  155--171.




\bibitem{carvalho}J.~Carvalho, B.~S.~Souza, V.~Trevisan, F.~C.~Tura,	Exponentially many graphs have a $Q$-cospectral mate, Discrete Math. 340 (2017) 2079--2085.
	
\bibitem{van}E.~R.~van Dam, W.~H.~Haemers, 	Which graphs are determined by their spectrum?  Linear Algebra Appl. 373 (2003) 241--272.
		
\bibitem{flajolet} P.~Flajolet, R.~Sedgewick, Analytic Combinatorics, Cambridge University Press, Cambridge, 2009
	
\bibitem{ghorbani}E.~Ghorbani, Spectral properties of cographs and $P_5$-free graphs, 67 (2019) 1701--1710.




\bibitem{gm}C.~D.~Godsil, B.~D.~MaKay, Some computational results on the spectra of graphs, Combinatorial Mathematics IV, Lecture Notes in Math. 560, Springer-Vedag, 1976, pp.73--92. 

\bibitem{harary} F.~Harary, E.~M.~Palmer, Graphical Enumeration, Academic Press, New York and London, 1973.

\bibitem{harary1975}F.~Harary, R.~W.~Robinson, A.~J.~Schwenk, Twenty-step algorithm for determining 	the asymptotic number of trees of various species, J. Austral. Math. SOC. 20A (1975) 483--503.
	
\bibitem{jung}	H.~A.~Jung, On a class of posets and the corresponding comparability graphs, J Combin. Theory, Ser. B, 24 (2) (1978) 125--133.

\bibitem{johnson1980JCTB} C.~R.~Johnson, M.~Newman, A note on cospectral graphs, J. Combin. Theory, Ser. B, 28 (1980) 96--103.	

\bibitem{jones} \'{A}.~A.~Jones, F.~Protti,R.~R.~Del-Vecchio, Cograph generation with linear delay, Theoret. Comput. Sci. 713 (2018) 1--10.

\bibitem{jacobs2015}D.~P.~Jacobs, V.~ Trevisan, F.~Tura, Eigenvalues and energy in threshold graphs, Linear Algebra   Appl. 465 (2015)  412--425.

\bibitem{jacobs} D.~P.~Jacobs, V.~Trevisan, F.Tura, Eigenvalue location in cographs. Discrete Appl Math 245 (2018)  220--235.

\bibitem{mohammadian} A. Mohammadian, V. Trevisan, Some spectral properties of cographs,
Discrete Math. 339 (4) (2016) 1261--1264.
	
\bibitem{lerchs} H.~Lerchs, On cliques and kernels, Tech. Report, Dept. of Comp. Sci., Univ. of Toronto, 1971.

\bibitem{lazzarin}	J.~Lazzarin, O.~F.~M\'{a}rquez, F.~C.~Tura, No threshold graphs are cospectral, Linear Algebra Appl. 560 (2019)  133--145.

\bibitem{mandal}S.~Mandal, R.~Mehatari, Spectral properties of $\mathcal{C}$-graphs. Comp. Appl. Math. 44 (2025) \#136.
	
\bibitem{moon}J.~W.~Moon, Some enumerative results on series-parallel networks, 	Annals Discrete Math. 33 (1987) 199--226.
		
\bibitem{oeis}OEIS Foundation Inc., The On-Line Encyclopedia of Integer Sequences, 2025.  https://oeis.org. 

\bibitem{otter} R.~Otter, The number of trees, Ann. Math. 49 (1948) 583--599.

\bibitem{qiu2019}L.~Qiu, Y.~Ji, W.~Wang, A new arithmetic criterion for graphs being determined by their generalized $Q$-spectrum, Discrete Math.
 342(10) (2019) 2770--2782.

\bibitem{ravelomanana} V.~Ravelomanana, L.~Thimonier, Asymptotic enumeration of cographs, Electron. Notes Discrete Math. 7 (2001)  58--61.
	
\bibitem{seinsche}D.~Seinsche, On a property of the class of $n$-colorable graphs,  J. Combin. Theory, Ser. B (16) (2) (1974): 191--193.

\bibitem{schwenk} A.~J.~Schwenk, Almost all trees are cospectral, in New Directions in the Theory of Graphs (ed. F.~Harary), Academic Press,	New York, 1973, pp. 275--307.

\bibitem{sumner}D.~P.~Sumner, Dacey graphs, J. Austra. Math. Soc. 18 (4) (1974) 492--502.
\end{thebibliography}
\end{document}